\documentclass[dvipsnames]{siamart190516}

\pdfinfo{
   /Author (Andrey A Popov, Changhong Mou, Traian Iliescu, and Adrian Sandu)
   /Title  (A Multifidelity Ensemble Kalman Filter with Reduced Order Control Variates)
}

\usepackage[utf8]{inputenc}
\usepackage{bbm,mathrsfs,mathtools,amsmath,amssymb,amsfonts,stmaryrd,upgreek}
\usepackage{fancyhdr,subfigure}
\usepackage{epsfig,graphicx,wrapfig,fancyvrb,listings,color,textcomp,float}
\usepackage{algorithmic}
\Crefname{ALC@unique}{Line}{Lines}
\usepackage{enumitem,url,hyperref}
\usepackage{sidecap,titlesec,scalerel}
\sidecaptionvpos{figure}{c} 

\usepackage{tikz}
\usetikzlibrary{positioning}
\usetikzlibrary{calc}


\newcommand{\obs}{ {\mathbf{Y}} }
\newcommand{\Obs}{ {Y} }



\newif\ifcomplete

\newcommand{\Hobs}{\mathcal{H}}
\newcommand{\HH}{\mathbf H}



\newcommand{\K}{ \mathbf{K} }

\newcommand{\Z}{ \mathbf{Z} }

\usepackage{accents}
\newcommand*{\bigdot}[1]{\accentset{\mbox{\large\bfseries .}}{#1}}


\newcommand{\Model}{\mathcal{M}}

\newcommand{\x}[1][]{%
   \ifthenelse{ \equal{#1}{} }
      {X}
      {X^{[#1]}}}
      
\renewcommand{\u}{U}
\newcommand{\uh}[1][]{%
   \ifthenelse{ \equal{#1}{} }
      {\hat{U}}
      {\hat{u}^{(#1)}}}
\newcommand{\xb}[1][]{%
   \ifthenelse{ \equal{#1}{} }
      {X^{\mathrm{b}}}
      {\mathbf{X}^{\mathrm{b},[#1]}}}
      
\newcommand{\ub}[1][]{%
   \ifthenelse{ \equal{#1}{} }
      {U^{\mathrm{b}}}
      {\mathbf{U}^{\mathrm{b},[#1]}}}
      
\newcommand{\ua}[1][]{%
   \ifthenelse{ \equal{#1}{} }
      {U^{\mathrm{a}}}
      {\mathbf{U}^{\mathrm{a},[#1]}}}
      
\newcommand{\uha}[1][]{%
   \ifthenelse{ \equal{#1}{} }
      {\hat{U}^{\mathrm{a}}}
      {\mathbf{\hat{U}}^{\mathrm{a},[#1]}}}
      
\newcommand{\pub}[1][]{%
   \ifthenelse{ \equal{#1}{} }
      {\*\Phi_r\,U^{\mathrm{b}}}
      {\*\Phi_r\,\mathbf{U}^{\mathrm{b},[#1]}}}
      
\newcommand{\uhb}[1][]{%
   \ifthenelse{ \equal{#1}{} }
      {\hat{U}^{\mathrm{b}}}
      {\mathbf{\hat{U}}^{\mathrm{b},[#1]}}}
      
\newcommand{\puhb}[1][]{%
   \ifthenelse{ \equal{#1}{} }
      {\*\Phi_r\,\hat{U}^{\mathrm{b}}}
      {\*\Phi_r\,\mathbf{\hat{U}}^{\mathrm{b},#1}}}
      
\newcommand{\zb}[1][]{%
   \ifthenelse{ \equal{#1}{} }
      {Z^{\mathrm{b}}}
      {\mathbf{Z}^{\mathrm{b},#1}}}
      
\newcommand{\za}[1][]{%
   \ifthenelse{ \equal{#1}{} }
      {Z^{\mathrm{a}}}
      {\mathbf{Z}^{\mathrm{a},[#1]}}}

\newcommand{\xa}[1][]{%
   \ifthenelse{ \equal{#1}{} }
      {X^{\rm a}}
      {\mathbf{X}^{\mathrm{a},[#1]}}}
\newcommand{\xf}[1][]{%
   \ifthenelse{ \equal{#1}{} }
      {\mathbf{x}^{\rm f}}
      {\mathbf{x}^{{\rm f}[#1]}}}
\newcommand{\xt}{ \mathbf{X}^{\mathrm{t}} }

\newcommand{\hofx}{\mathbf{z}}

\newcommand{\hofxb}[1][]{%
   \ifthenelse{ \equal{#1}{} }
      {\hofx^{\mathrm{b}}}
      {\hofx^{{\mathrm{b}}[#1]}}}
\newcommand{\hofxa}[1][]{%
   \ifthenelse{ \equal{#1}{} }
      {\hofx^{\rm a}}
      {\hofx^{{\rm a}[#1]}}}
\newcommand{\dhofxb}[1][]{%
   \ifthenelse{ \equal{#1}{} }
      {\bigdot{\hofx}^{\mathrm{b}}}
      {\bigdot{\hofx}^{{\mathrm{b}}[#1]}}}

\newcommand{\one}{\mathbf{1}}

\newcommand{\errb}[1][]{%
   \ifthenelse{ \equal{#1}{} }
      {\varepsilon^{\mathrm{b}}}
      {\varepsilon^{{\mathrm{b}}[#1]}}}
\newcommand{\erra}[1][]{%
   \ifthenelse{ \equal{#1}{} }
      {\varepsilon^{\rm a}}
      {\varepsilon^{{\rm a}[#1]}}}
\newcommand{\erro}[1][]{%
   \ifthenelse{ \equal{#1}{} }
      {\eta}
      {\boldsymbol{\eta}^{(#1)}}}

\newcommand{\errm}[1][]{%
   \ifthenelse{ \equal{#1}{} }
      {\eta}
      {\eta^{[#1]}}}
\newcommand{\wb}[1][]{%
   \ifthenelse{ \equal{#1}{} }
      {w^{\mathrm{b}}}
      {w^{{\mathrm{b}}[#1]}}}
\newcommand{\wa}[1][]{%
   \ifthenelse{ \equal{#1}{} }
      {w^{\rm a}}
      {w^{{\rm a}[#1]}}}


\renewcommand{\Re}{\mathbbm{R}}

%



\newtheorem{remark}{Remark}
\newtheorem{assumption}{Assumption}



\newcommand{\norm}[1]{\bigl\Vert #1 \bigr\Vert}



\newcommand{\Nens}{{{\rm N}}}




\makeatletter
\renewcommand{\p@subsection}{}
\renewcommand{\p@subsubsection}{}
\makeatother

\def\!#1{\mathcal{#1}}
\def\*#1{\boldsymbol{\mathbf{#1}}}
\def\|#1{\textnormal{#1}}

\def\norm#1{\left\lVert#1\right\rVert}

\newcommand{\Mean}[1]{\*\mu_{#1}}
\newcommand{\MeanE}[1]{\widetilde{\*\mu}_{#1}}
\newcommand{\Cov}[2]{\*\Sigma_{#1,#2}}
\newcommand{\CovE}[2]{\widetilde{\*\Sigma}_{#1,#2}}
\newcommand{\En}[1]{\mathsf{E}_{#1}}
\newcommand{\An}[1]{\mathsf{A}_{#1}}
\renewcommand{\Z}[2][\Hobs_i]{\mathsf{A}_{#1(#2)}}

\newcommand{\inner}[2]{\left\langle#1,#2\right\rangle}

\DeclareMathOperator{\tr}{tr}


\graphicspath{ {Figures/} }

\newsiamthm{conjecture}{Conjecture}%
\crefname{conjecture}{Conjecture}{Conjectures}

\def\realtitle{A Multifidelity Ensemble Kalman Filter with Reduced Order Control Variates}

\title{{\realtitle}\thanks{Submitted to the arXiv \today.
		\funding{The work of Popov and Sandu was supported by awards NSF CCF--1613905 and NSF ACI--1709727,
			and by the Computational Science Laboratory at Virginia Tech.
			The work of Mou and Iliescu was supported by awards NSF DMS--1821145 and NSF CMMI--1929731.}}}
\date{\today}
\author{Andrey A. Popov\thanks{Computational Science Laboratory, Department of Computer Science, Virginia Tech, Blacksburg, VA
  (\email{apopov@vt.edu}, \email{sandu@cs.vt.edu}).}
\and Changhong Mou\thanks{Department of Mathematics, Virginia Tech, Blacksburg, VA
  (\email{cmou@vt.edu}, \email{iliescu@vt.edu}).}
\and Traian Iliescu\footnotemark[3]
\and Adrian Sandu\footnotemark[2]
}

\headers{MFEnKF with Reduced Order POD Control Variates}{A. A. Popov, C. Mou, T. Iliescu, and A. Sandu}

\usepackage{csl}

\begin{document}


\csltitle{\realtitle}
\cslauthor{Andrey A. Popov, Changhong Mou, Traian Iliescu, and Adrian Sandu}
\cslyear{20}
\cslreportnumber{2}
\cslemail{apopov@vt.edu, cmou@vt.edu, iliescu@vt.edu, sandu@cs.vt.edu}
\csltitlepage

\maketitle

\begin{abstract} 
This work develops a new multifidelity ensemble Kalman filter (MFEnKF) algorithm based on linear control variate framework.  The approach allows for rigorous multifidelity extensions of the EnKF, where the uncertainty in  coarser fidelities in the hierarchy of models represent control variates for the uncertainty in finer fidelities. 
Small ensembles of high fidelity model runs are complemented by larger ensembles of cheaper, lower fidelity runs, to obtain much improved analyses at only small additional computational costs. 
We investigate the use of reduced order models as coarse fidelity control variates in the MFEnKF, and provide analyses to quantify the improvements over the traditional ensemble Kalman filters. 
    We apply these ideas to perform data assimilation with a quasi-geostrophic test problem, using direct numerical simulation and  a corresponding POD-Galerkin reduced order model. Numerical results show that the two-fidelity MFEnKF provides better analyses than existing EnKF algorithms at comparable or reduced computational costs.

\end{abstract}

\begin{keywords}
Bayesian inference, control variates,  data assimilation, multifidelity ensemble Kalman filter, data assimilation, reduced order modeling
\end{keywords}

\begin{AMS}
62F15, 62M20, 65C05, 65M60, 76F70, 86A22, 93E11
\end{AMS}

\section{Introduction.}

Data assimilation~\cite{asch2016data,law2015data,reich2015probabilistic} aims to improve forecasting power of dynamical systems~\cite{strogatz2018nonlinear} by fusing information from mathematical models and observations from nature. Ensemble Kalman filters (EnKF)~\cite{evensen2009data,evensen1994sequential,Burgers_1998_EnKF,kalnay2003atmospheric}  have gained widespread popularity for large-scale data assimilation. They use a Monte Carlo approach to propagate covariance information, and take advantage of ensemble forecasting to remove the linear model assumption in conventional Kalman filtering. 

The idea of leveraging a hierarchy of models for increasing the efficiency of Monte Carlo estimation algorithms was proposed in~\cite{Giles_2008_MLMC,Giles_2015_MLMC}, and the multi-level Monte Carlo approach was successfully applied for inference with low-dimensional models.
Recent work extended the multilevel idea to operational EnKF algorithms for stochastic models~\cite{Hoel_2016_MLEnKF,Chernov_2017_MLEnKF,Kikuchi_2015_ROM-EnKF}, proposed multilevel sampling ensemble smoothers\cite{Sandu_2016_reduced-sampling4DVar}, and developed multilevel particle filters \cite{Reich_2016_MLETPF,Gregory_2017_MLETPF} . 

Reduced order modeling is the approach of constructing inexpensive surrogates able to capture the dominant dynamics of large-scale systems.  Previous work has employed 
reduced order models (ROMs)~\cite{brunton2019data,hesthaven2015certified,quarteroni2015reduced} to speed up variational data assimilation~\cite{Yaremchuk_2009_ROM-fdvar,Tian_2011_POD-fdvar,Heemink_2006_ROM-fdvar,Sandu_2015_POD-inverse,daescu2007efficiency,cao2007reduced,kaercher2018reduced,maday2015parameterized}. The underlying idea is to perform the optimization in a reduced order space, and then to reconstruct this subspace around the new point in the full state space. It has been shown in \cite{Sandu_2015_POD-inverse} that the reduced order basis needs to include snapshot information from both the forward and the adjoint models in order for the reduced space optimization to progress to the full order optimal point.

ROMs have also been used to develop new  Markov chain Monte Carlo~\cite{cui2015data,galbally2010non,himpe2015data}, Kalman filters (KF)~\cite{dihlmann2016reduced}, and EnKF~\cite{he2011use,lin2014efficient,pagani2017efficient,xiao2018parameterised} algorithms.
As opposed to variational methods, in the KF and EnKF settings ROMs have been used as replacements to traditional physics-based models. To quantify the effect of replacing the physics-based models with ROMs, rigorous error bounds were derived for both the KF~\cite{dihlmann2016reduced} and EnKF~\cite{pagani2017efficient} algorithms.

This work develops a new multifidelity ensemble Kalman filter algorithm building upon the theory of multivariate control variates~\cite{rubinstein1985efficiency}, and on ROM data assimilation approaches \cite{Sandu_2015_POD-inverse}. 
Small ensembles of high fidelity model runs are complemented by larger ensembles of cheaper, lower fidelity runs, to obtain much improved analyses at only small additional computational costs.
New contributions of this work include rederiving the EnKF data assimilation approach from a multivariate linear control variate theory perspective. This perspective allows for rigorous multifidelity extensions of the EnKF, where the uncertainty in  coarser levels in the hierarchy of models represent control variates for the uncertainty in finer levels. Moreover, the state of different control variates can reside in different spaces (e.g. those with different dimension and/or different inner product), which justifies the ``multifidelity'' \cite{peherstorfer2018survey} EnKF name given to our approach. The mapping between different spaces (i.e., the mapping of each control variate to the space of the corresponding principal variates)  is done by coupling operators that can be computed in an optimal way.  The paper derives an optimal statistical estimation framework in order to show significant reduction in both the cost of the method and in error. 

Key innovations of the multifidelity EnKF approach as compared to the standard multi-level EnKF (MLEnKF) proposed in \cite{Hoel_2016_MLEnKF,Chernov_2017_MLEnKF} include the use of multivariate linear control variate theory \cite{rubinstein1985efficiency} to rigorously incorporate all model levels in the statistical estimation approach. MLEnKF \cite{Hoel_2016_MLEnKF,Chernov_2017_MLEnKF} incorporates different model levels using signed empirical measures, which makes the multilevel covariances possibly non-positive; in our approach the multilevel empirical covariances are always non-negative. The use of signed empirical measures over the fine space requires MLEnKF to represent the states from all model levels into the same (fine level) space. In the proposed multifidelity EnKF approach different control variates represent states from different model levels that live in their own spaces; the mapping between spaces is done explicitly by coupling operators that are computed in a statistically optimal manner, obtained from the theory to the optimal gains required to compute the covariance estimates. (We note that these operators are equal to identity in MLEnKF).

The remainder of the paper is organized as follows. 
The data assimilation problem, control variate theory, and the ensemble Kalman filter are reviewed in \cref{sec:background,}.
Properties of the ROM as a control variate are analyzed in \cref{sec:podcontrol}.
The multifidelity ROM EnKF algorithm is derived in \cref{sec:twospacepodenkf}.
The quasi-geostrophic test problem and the corresponding models of different fidelity are introduced in \cref{sec:models}.
Numerical experiments are reported in \cref{sec:numerical}, 
and closing remarks are made in \cref{sec:conclusions}.

\section{Control variates and the Ensemble Kalman filter.}
\label{sec:background}

Consider the data assimilation problem of predicting the state of a natural phenomenon through modeling and sparse noisy nonlinear observations. 

Let $\xa_{i-1}$ be a random variable whose distribution represents (our uncertain knowledge of) the true state of the physical system, projected onto model space, at time $t_{i-1}$. This knowledge is propagated to time $t_{i}$ through the model dynamics $\Model$:
\begin{equation}
\label{eqn:umodel}
    \xb_i = \Model_{i-1,i}\left(\xa_{i-1},\mathcal{E}^{\Model}_i\right) \stackrel{\rm assumed}{=} \Model_{i-1,i}\left(\xa_{i-1}\right),
\end{equation}
where the distribution of the random variable $\xb_i$ represents the prior knowledge of the state as time $i$, and $\mathcal{E}^{\Model}_i$ is a random variable quantifying stochastic effects and model errors. In this paper we assume that the model \cref{eqn:umodel} is deterministic and exact, meaning that $\mathcal{E}^{\Model}_i = 0$.
Noisy observations of the true state $\xt_i$ are collected at time $t_i$:
\begin{equation}
\label{eqn:uobs}
    \Obs_i = \Hobs_i\left(\xt_i,\mathcal{E}^{\Hobs}_i\right)  \stackrel{\rm assumed}{=}   \Hobs_i(\xt_i) + \erro_i, \quad \erro_i\sim\!N(\*0,\Cov{\erro_i}{\erro_i}),
\end{equation}
where $\Hobs_i$ is the (non)linear observation operator, and $\mathcal{E}^{\Hobs}_i$ is a random variable quantifying uncertainty in the observations. We assume that the observation errors $\erro_i$ are additive, unbiased, and Gaussian, with observation error covariance matrices $\Cov{\erro_i}{\erro_i}$.


Using the prior knowledge of the state \cref{eqn:umodel} at time $t_i$ described by the probability density  $\pi(\xb_{i})$, and the likelihood of observations \cref{eqn:uobs} described by the probability density  $\pi(\Obs_i|\xb_{i})$, the Bayesian approach gives the posterior knowledge of the state:
\begin{equation}
    \pi(\xb_{i}|\Obs_i) \propto \pi(\Obs_i|\xb_{i})\,\pi(\xb_{i}).
\end{equation}
We seek to approximate this posterior probability density in an ensemble Kalman filter framework.

As some concepts in this paper are not present in traditional data assimilation literature, we use the following notation (slightly different from ~\cite{ide1997unified}) for presentation clarity. Let $\chi$ and $\upsilon$ be random variables. The exact mean of $\chi$ is denoted $\Mean{\chi}$, and the empirical (sample) mean by $\MeanE{\chi}$. Similarly, the exact covariance is denoted by $\Cov{\chi}{\upsilon}$, and the sample covariance  by $\CovE{\chi}{\upsilon}$. An ensemble of samples of $\chi$ is denoted by $\En{\chi}$, the $i$-th sample by $\*\chi^{[i]}$, and the scaled ensemble anomalies (defined later) by $\An{\chi}$. 

\subsection{Linear control variates.}

Consider a random variable $\chi$ with support $\Omega_\chi = \mathbb{R}^n$ with a distribution that represents the uncertainty in the state. Its mean $\Mean{\chi}$ represents the minimum variance estimator of the true state, and its higher moments quantify the confidence in this estimator. We call $\chi$ the {\it principal variate}.

Consider a second random variable $\hat\upsilon$ with support $\Omega_{\hat\upsilon}=\mathbb{R}^r$, which is highly correlated with $\chi$, and has a known mean $\Mean{\hat\upsilon}$. 
This second random variable $\hat\upsilon$ is a \textit{control variate} used to improve the estimate $\Mean{\chi}$ of the true state of the system. Here we consider $r\ll n$, though this is not required in general. 

Our goal is to estimate $\Mean{\chi}$, and the direct way is to sample the principal variate $\chi$. The linear control variate approach seeks to obtain better estimates by taking samples of {\it both} the principal variate $\chi$ and the control variate $\hat\upsilon$. Specifically, in a multivariate linear control variate approach~\cite{rubinstein1985efficiency} one constructs the new random variable
\begin{equation}
\label{eqn:linearcv}
    \zeta = \chi - \*S\,(\hat{\upsilon} - \Mean{\hat\upsilon}),
\end{equation}
that we call the \textit{total variate}, which has the same mean as the principal variate $\Mean{\zeta}=\Mean{\chi}$, but whose other moments have been modified by the knowledge of the control variate $\hat\upsilon$. The deterministic {\it gain matrix} $\,\*S\,\in\mathbb{R}^{n\times r}$ is chosen such as to minimize the generalized variance of the total variate. We recall the following result \cite[Lemma 1 in Appendix]{rubinstein1985efficiency}:
\begin{lemma}[Optimal gain]
\label{lem:linearcv}
The optimal gain that minimizes the generalized variance of the total variate $\zeta$ (the determinant of $\Cov{\zeta}{\zeta}$) is
\begin{equation}
 \label{eqn:optimallinearcv}
    \*S = \Cov{\chi}{\hat{\upsilon}}\,\Cov{\hat{\upsilon}}{\hat{\upsilon}}^{-1}.
\end{equation}
\end{lemma}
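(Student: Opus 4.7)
The plan is to rewrite $\Cov{\zeta}{\zeta}$ as the sum of an $\*S$-independent matrix plus a positive semidefinite quadratic form in $\*S$, and then to appeal to the monotonicity of the determinant on the positive definite cone. First, applying bilinearity of covariance to \cref{eqn:linearcv} and using $\Cov{\hat\upsilon}{\chi}=\Cov{\chi}{\hat\upsilon}^T$, I would expand
\begin{equation*}
\Cov{\zeta}{\zeta} = \Cov{\chi}{\chi} - \*S\,\Cov{\chi}{\hat\upsilon}^T - \Cov{\chi}{\hat\upsilon}\,\*S^T + \*S\,\Cov{\hat\upsilon}{\hat\upsilon}\,\*S^T.
\end{equation*}
Throughout I shall assume $\Cov{\hat\upsilon}{\hat\upsilon}\succ 0$, so that the candidate $\*S^\ast := \Cov{\chi}{\hat\upsilon}\,\Cov{\hat\upsilon}{\hat\upsilon}^{-1}$ from \cref{eqn:optimallinearcv} is well defined.

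Second, I would perform the matrix analogue of completing the square: a direct computation verifies the identity
\begin{equation*}
\Cov{\zeta}{\zeta} = \bigl(\Cov{\chi}{\chi} - \Cov{\chi}{\hat\upsilon}\,\Cov{\hat\upsilon}{\hat\upsilon}^{-1}\,\Cov{\chi}{\hat\upsilon}^T\bigr) + (\*S - \*S^\ast)\,\Cov{\hat\upsilon}{\hat\upsilon}\,(\*S - \*S^\ast)^T.
\end{equation*}
The parenthesised matrix is independent of $\*S$, while the second summand is positive semidefinite and vanishes iff $\*S = \*S^\ast$.

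Finally, to convert this PSD ordering of covariance matrices into an ordering of their generalized variances, I would invoke the standard monotonicity of the determinant: for $\*A\succ 0$ and $\*B\succeq 0$,
\begin{equation*}
\det{\*A+\*B} = \det{\*A}\,\det{\*I + \*A^{-1/2}\,\*B\,\*A^{-1/2}} \geq \det{\*A},
\end{equation*}
with equality iff $\*B=0$, since the eigenvalues of $\*A^{-1/2}\*B\*A^{-1/2}$ are all nonnegative. Taking $\*A$ to be the $\*S$-independent Schur complement above and $\*B$ the remaining quadratic term identifies $\*S^\ast$ from \cref{eqn:optimallinearcv} as the unique minimiser of $\det{\Cov{\zeta}{\zeta}}$. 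The main (and essentially only) obstacle is this last step: the map $\*S\mapsto\det{\Cov{\zeta}{\zeta}}$ is not convex as a scalar function, so a direct calculus-of-variations attack is awkward, and it is really the PSD decomposition combined with determinant monotonicity that delivers a clean global optimum. A minor technical caveat is the positive definiteness of the Schur complement $\*A$, which holds whenever the joint covariance of $(\chi,\hat\upsilon)$ is nondegenerate, i.e., whenever $\chi$ is not an affine function of $\hat\upsilon$.
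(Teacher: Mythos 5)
Your argument is correct. Note, however, that the paper does not actually supply a proof of \cref{lem:linearcv}: it recalls the statement from Lemma~1 in the appendix of Rubinstein's work on multivariate control variates, so there is no in-text derivation to compare against. Your route --- expanding $\Cov{\zeta}{\zeta}$ by bilinearity, completing the square to write it as the $\*S$-independent Schur complement $\Cov{\chi}{\chi} - \Cov{\chi}{\hat\upsilon}\,\Cov{\hat\upsilon}{\hat\upsilon}^{-1}\,\Cov{\hat\upsilon}{\chi}$ plus the positive semidefinite term $(\*S-\*S^\ast)\,\Cov{\hat\upsilon}{\hat\upsilon}\,(\*S-\*S^\ast)^\intercal$, and then invoking monotonicity of the determinant on the positive semidefinite cone --- is the standard proof, and it is precisely ``the construction'' that \cref{rem:positivity-of-covariance} later alludes to when asserting that the total variate covariance is s.p.d.\ for any gain. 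Your decomposition in fact proves something slightly stronger than the stated lemma: $\*S^\ast$ minimizes $\Cov{\zeta}{\zeta}$ in the Loewner order, so it simultaneously minimizes every monotone scalarization (determinant, trace, any unitarily invariant norm), not just the generalized variance; this is worth stating since the paper's subsequent covariance inequalities, e.g.\ $0 \le \Cov{\zeta}{\zeta} \le \Cov{\chi}{\chi}$, rely on the Loewner-order version rather than the determinant version. Your two technical caveats (invertibility of $\Cov{\hat\upsilon}{\hat\upsilon}$, which the lemma's statement already presumes via the inverse, and strict positive definiteness of the Schur complement for uniqueness of the minimizer of the determinant) are the right ones; in the degenerate case the determinant bound $\det{\*A+\*B}\ge\det{\*A}$ still holds trivially, so optimality survives even though uniqueness of the determinant-minimizer may not.
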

%

Consider now the case where the mean $\Mean{\hat\upsilon}$ of the control variate is unknown. However, one can sample a random variable $\upsilon \in \Re^r$ that has the same mean and support as $\hat\upsilon$ but is independent of both $\chi$ and $\hat\upsilon$.  Using (a realization of) what we call the \textit{ancillary variate} $\upsilon$ as a proxy for the exact mean $\Mean{\hat\upsilon}=\Mean{\upsilon}$, the total variate  \cref{eqn:linearcv} becomes:
\begin{equation}
\label{eqn:CV-independent-mean}
    \zeta = \chi - \,\*S\,(\hat{\upsilon} - \upsilon).
\end{equation}
Letting $\omega = \hat{\upsilon} - \upsilon + \Mean{\hat\upsilon}$, $\Mean{\omega} = \Mean{\hat\upsilon}$, and $\hat{\upsilon} - \upsilon = \omega - \Mean{\omega}$, equation~\eqref{eqn:CV-independent-mean} reduces to equation~\cref{eqn:linearcv} with $\Mean{\hat\upsilon}$ replaced by $\omega$. By \cref{lem:linearcv} the optimal gain is:
\begin{equation}
    \label{eqn:optimal-gain}
    \,\*S = \Cov{\chi}{\hat{\upsilon}}\,{\left(\Cov{\hat{\upsilon}}{\hat{\upsilon}} + \Cov{\upsilon}{\upsilon}\right)}^{-1},
\end{equation}
where $\Cov{\hat{\upsilon}}{\hat{\upsilon}} + \Cov{\upsilon}{\upsilon} = \Cov{\omega}{\omega}$ 
and $\Cov{\chi}{\hat\upsilon} = \Cov{\chi}{\omega}$.

If the control variate $\hat\upsilon$ and its ancillary variate $\upsilon$ share not only the same mean, but also the same covariance, $\Cov{\upsilon}{\upsilon}= \Cov{\hat{\upsilon}}{\hat{\upsilon}}$, then~\eqref{eqn:optimal-gain} becomes 
\begin{equation}
   \label{eqn:optimal-gain-half}
    \,\*S = \frac{1}{2}\Cov{\chi}{\hat{\upsilon}}\Cov{\hat{\upsilon}}{\hat{\upsilon}}^{-1}.
\end{equation}
%

\begin{remark}[Total variate covariance]\label{rem:positivity-of-covariance}
The covariance of the total variate \cref{eqn:CV-independent-mean} using the optimal gain \cref{eqn:optimal-gain} is:
\begin{equation}
\label{eqn:covariance-total-variate}
\begin{split}
    \Cov{\zeta}{\zeta} &=  \Cov{\chi}{\chi} - \Cov{\chi}{\hat\upsilon}\,\*S^\intercal - \,\*S\,\Cov{\hat\upsilon}{\chi} + \,\*S\,\Cov{\hat\upsilon}{\hat\upsilon}\,\*S^\intercal + \,\*S\,\Cov{\upsilon}{\upsilon}\,\*S^\intercal\\
    &= \Cov{\chi}{\chi} - \Cov{\chi}{\hat\upsilon}{\left(\Cov{\hat\upsilon}{\hat\upsilon} + \Cov{\upsilon}{\upsilon}\right)}^{-1}\Cov{\hat\upsilon}{\chi}.
\end{split}
\end{equation}
Note that this is always a symmetric semi-positive definite (s.p.d.) matrix that is smaller (in s.p.d. matrix sense) than the principal variate covariance, $0 \le \Cov{\zeta}{\zeta} \le \Cov{\chi}{\chi}$. In contrast, the multilevel covariance formula for variables that live in the same space, $\Cov{\zeta}{\zeta} = \Cov{\chi}{\chi} - \Cov{\hat{\upsilon}}{\hat{\upsilon}} + \Cov{\upsilon}{\upsilon}$ proposed in~\cite{Hoel_2016_MLEnKF}, does not necessarily enjoy these properties, as the signed empirical measure ignores cross covariances. The covariance \cref{eqn:covariance-total-variate} is s.p.d. for any matrix $\,\*S$ by the construction in the proof of \cref{lem:linearcv}.
\end{remark}

\subsection{Multiple fidelities of control variates.}
\label{sec:multifidelity}
One can recursively apply  the control variate approach \eqref{eqn:CV-independent-mean} to improve estimation of the mean $\Mean{\hat{\upsilon}}$. To this end, in \eqref{eqn:CV-independent-mean} we identify $\upsilon_0 \equiv \chi $ and the first fidelity the control and ancillary variate with $\hat{\upsilon}_1 \equiv \hat{\upsilon} $ and $\upsilon_1 \equiv \upsilon$, respectively. Next, we consider $\upsilon_1$ as a principal variate, and use a control variate $\hat{\upsilon}_2$ to build a total variate $\upsilon_1 - \,\*S_2\,(\hat{\upsilon}_2-\Mean{\hat{\upsilon}_2})$. Next, replace the exact mean $\Mean{\hat{\upsilon}_2}$ by a realization of  the ancillary variate $\upsilon_2$, and repeat until we reach the $\!L$-th fidelity control variate $\hat{\upsilon}_{\!L}$ with the ancillary variate $\upsilon_{\!L}$. This telescopic structure replaces the ancillary variate at fidelity $\ell-1$ by a total variate constructed using the next fidelity $\ell$ control and ancillary variates:
\begin{equation}
\label{eqn:CV-independent-mean-fidelity}
    \upsilon_{\ell-1} \xleftarrow{} \upsilon_{\ell-1} - \,\*S_\ell(\hat{\upsilon}_\ell - \upsilon_\ell),
\quad  \,\*S_\ell = \Cov{\upsilon_{\ell-1}}{\hat{\upsilon}_\ell}{\left(\Cov{\hat{\upsilon}_\ell}{\hat{\upsilon}_\ell} + \Cov{\upsilon_\ell}{\upsilon_\ell}\right)}^{-1}, 
\quad \ell = 1, \dots, \!L.
\end{equation}
The total variate $\zeta$, representing a multifidelity control variate approach for the top fidelity principal variate $\chi$, is:
\begin{equation}
    \zeta = \chi - \sum_{\ell=1}^{\!L} \overline{\,\*S}_\ell\,\left(\hat{\upsilon}_{\ell} - \upsilon_{\ell}\right),
    \quad \overline{\,\*S}_\ell = \prod_{\lambda=1}^{\ell}\,\*S_{\lambda}.
\end{equation}
%


%

\subsection{Implementation of linear control variates using ensembles.}

In practice, the exact distributions of $\chi$, $\hat\upsilon$, and $\upsilon$ are not available, therefore computing the exact moments of the total variate $\zeta$ is not possible. However, we assume that one can sample from these distributions, and seek to estimate the statistics of $\zeta$.

For this, take $\Nens_\chi$ pairwise samples $(\*\chi^{[k]}, \hat{\*\upsilon}^{[k]})$ of the principal and control variates (to be able to derive correlated statistics), and construct the ensembles $\En{\chi} =$ $\left[\*\chi^{[1]}, \dots, \*\chi^{[\Nens_\chi]}\right]$ $\in \Re^{n \times \Nens_\chi}$ and
$\En{\hat\upsilon} =$ $\left[\hat{\*\upsilon}^{[1]}, \dots, \hat{\*\upsilon}^{[\Nens_\chi]}\right]$ $\in \Re^{r \times \Nens_\chi}$.
Take $\Nens_\upsilon$ samples $\*\upsilon^{[k]}$ of the ancillary variate and construct the ensemble  $\En{\upsilon} = \left[\*\upsilon^{[1]}, \dots, \*\upsilon^{[\Nens_\upsilon]}\right]$ $\in \Re^{r \times \Nens_\upsilon}$. 

The empirical means and the ensembles of anomalies are defined as: 
\begin{equation}
\begin{gathered}
    \MeanE{\chi} \coloneqq \Nens_\chi^{-1}\,\En{\chi}\,\*1_{\Nens_\chi},\quad 
    \MeanE{\hat{\upsilon}} \coloneqq \Nens_\chi^{-1} \,\En{\hat\upsilon}\,\*1_{\Nens_\chi},\quad 
    \MeanE{\upsilon} \coloneqq \Nens_{\upsilon}^{-1} \,\En{\upsilon}\,\*1_{\Nens_{\upsilon}},\\
    \An{\chi} \coloneqq (\Nens_\chi-1)^{-\frac{1}{2}} \left(\En{\chi} - \MeanE{\chi}\*1_{\Nens_\chi}^\intercal\right),\
    \An{\hat{\upsilon}} \coloneqq (\Nens_\chi-1)^{-\frac{1}{2}} \left(\En{\hat{\upsilon}} - \MeanE{\hat{\upsilon}}\*1_{\Nens_\chi}^\intercal\right),\\
    \An{\upsilon} \coloneqq (\Nens_\upsilon-1)^{-\frac{1}{2}} \left(\En{\upsilon} - \MeanE{\upsilon}\*1_{\Nens_\upsilon}^\intercal\right),
\end{gathered}
\end{equation}
which leads to the empirical covariances:
\begin{equation}
\begin{gathered}
    \CovE{\chi}{\chi} =\An{\chi}\An{\chi}^\intercal,\quad 
    \CovE{\hat{\upsilon}}{\hat{\upsilon}} =\An{\hat{\upsilon}}\An{\hat{\upsilon}}^\intercal,\quad
    \CovE{\chi}{\hat{\upsilon}} = \An{\chi}\An{\hat{\upsilon}}^\intercal = 
    \CovE{\hat{\upsilon}}{\chi }^\intercal,\quad
    \CovE{\upsilon}{\upsilon} =\An{\upsilon}\An{\upsilon}^\intercal.
\end{gathered}
\end{equation}

The empirical mean and covariance estimates of the total variate \cref{eqn:CV-independent-mean} are:
\begin{equation}
\begin{split}
    \MeanE{\zeta} &= \Nens_\chi^{-1} \sum_{k=1}^{\Nens_\chi} \left(\*\chi^{[k]} - \,\*S \hat{\*\upsilon}^{[k]}\right) +  \Nens_\upsilon^{-1}\sum_{k=1}^{\Nens_\upsilon} \,\*S\,\*\upsilon^{[k]}, \\
%
%
    \CovE{\zeta}{\zeta} &= \CovE{\chi}{\chi} + \,\*S\,\CovE{\hat{\upsilon}}{\hat{\upsilon}}\,\*S^\intercal - 
    \CovE{\chi}{\hat{\upsilon}}\,\*S^\intercal - \,\*S\,\CovE{\hat{\upsilon}}{\chi} +  \,\*S\,\CovE{\upsilon}{\upsilon}\,\*S^\intercal.
\end{split}    
\end{equation}

When the exact covariances $\Cov{\chi}{\hat{\upsilon}}$ and $\Cov{\hat{\upsilon}}{\hat{\upsilon}}$ are not known, but the exact covariance of the ancillary variate $\Cov{\upsilon}{\upsilon}$ is known,  the optimal gain matrix \cref{eqn:optimal-gain} is approximated by
\begin{equation}
    \,\*S \approx \,\widetilde{\*S} = \CovE{\chi}{\hat{\upsilon}}{\left(\CovE{\hat{\upsilon}}{\hat{\upsilon}} + \Cov{\upsilon}{\upsilon}\right)}^{-1}, \label{eqn:encvenkf}
\end{equation}
which is well defined when $\Cov{\upsilon}{\upsilon}$ is full rank. In the case where the underlying random variables are Gaussian, the expected value of the sampled gain matrix, $\*{\tilde{S}}$, is not the exact gain matrix, even in the scalar case\cite{popov2020explicit}. 

When $\Cov{\upsilon}{\upsilon}$ is also unknown, and all empirical covariance estimates are undersampled, meaning that the rank of the sampled covariance is lower than the rank of the covariance of the underlying random variable, then the approximation
\begin{equation}
    \,\*S \approx \,\widetilde{\*S} = \CovE{\chi}{\hat{\upsilon}}{\left(\CovE{\hat{\upsilon}}{\hat{\upsilon}} + \CovE{\upsilon}{\upsilon}\right)}^{-1},\label{eqn:sgainfullest}
\end{equation}
can be ill-defined, and a better approach is required to estimate the optimal gain matrix. In this case our goal will be to determine a control variate whose relation with the principal variate leads to a good approximation of the gain matrix with minimal reliance on sampling.

If the cost of obtaining one sample of the principal variate  is $C_\chi$ and the cost of a sample from either the ancillary or the control variates is $C_\upsilon$, then the cost of a two fidelity estimator is:
\begin{equation}
    \Nens_\chi C_\chi + (\Nens_\chi + \Nens_\upsilon) C_\upsilon,\label{eqn:cost}
\end{equation}
which, if the cost of sampling the coarser random variables is negligible $C_\upsilon \ll C_\chi$, is roughly equal to the cost of sampling the principal variate. 

\subsection{Ensemble Kalman filter.}

The traditional Kalman filter~\cite{Kalman_1960} (KF) aims to optimally solve the Bayesian inference problem, under the assumption that the probability distributions of the prior knowledge about the state, observations, and the resulting posterior knowledge are all Gaussian. The KF also makes the assumptions that $\Mean{\xb_i} = \xt_i$ and $\Mean{\Hobs(\xb_i)} = \Mean{\Obs_i}$.  We now re-derive the ensemble Kalman filter (EnKF) framework from a multivariate linear control variate theory perspective.

The principal variate represents our prior knowledge $\chi  \equiv \xb_i$, the control variate is the model-predicted observations $\hat{\upsilon} \equiv \Hobs(\xb_i)$, and the ancillary variate is the observations $\upsilon \equiv \Obs_i$. The goal is to estimate the true state, which is the mean of the principal variate $\Mean{\chi} \equiv \Mean{\xb_i} = \xt_i$. The posterior knowledge is represented by the new, reduced variance total variate $\zeta \equiv \xa_i$ \eqref{eqn:CV-independent-mean}:
\begin{equation}
\label{eqn:KF}
    \xa_i = \xb_i - \*K_i\left(\Hobs(\xb_i) - \Obs_i\right),
\end{equation}
where the control variate gain matrix $\*S \equiv \*K_i$ is the Kalman gain. The mean of the total variate is also the true state $\Mean{\zeta} \equiv \Mean{\xa_i} = \xt_i = \Mean{\chi}$, but its covariance is smaller.

EnKF represents the random variables by ensembles of $\Nens$ samples, with $\En{\xb}$ and $\En{\Hobs(\xb)}$ defined in the usual way.
The perturbed observations version of the EnKF~\cite{Burgers_1998_EnKF} also constructs an ensemble of independent samples from the observation distribution: 
\begin{equation}
    \En{\Obs_i} = \obs_i\,\*1^\intercal_{\Nens} + \An{\erro_i}, 
\end{equation}
where the anomalies $\An{\erro_i}$ are derived from an ensemble of $\Nens$ independent samples from the observation error distribution \eqref{eqn:uobs}.


It is typically assumed that the only variable whose covariance is known is $\Obs_i$, meaning that the Kalman gain is approximated using~\cref{eqn:encvenkf},
\begin{equation}
\label{eqn:statKgain}
    \widetilde{\mathbf{K}}_i = \CovE{\xb_i}{\Hobs(\xb_i)}{\left(\CovE{\Hobs(\xb_i)}{\Hobs(\xb_i)} + \Cov{\erro_i}{\erro_i}\right)}^{-1}.
\end{equation}
Thus, the EnKF analysis formulas are:
\begin{equation}
\label{eqn:enkfanalysis}
\begin{gathered}
    \MeanE{\xa_i} = \MeanE{\xb_i} - \widetilde{\mathbf{K}}_i\,\widetilde{\mathbf{d}}_i,\quad \An{ \xa_i } = \An{ \xb_i } - \widetilde{\mathbf{K}}_i\left(\An{ \Hobs(\xb_i) } - \An{\erro_i}\right),\\
    \widetilde{\mathbf{d}}_i = \MeanE{\Hobs(\xb_i)} - \MeanE{\Obs_i},\quad \En{\xa_i} = \MeanE{\xa_i}\,\one_\Nens^\intercal + (\Nens-1)^{\frac{1}{2}}\,\An{ \xa_i },
\end{gathered}
\end{equation}
with the ensemble $\En{\xa_i}$ representing the posterior uncertainty at time $t_i$.

The number of ensemble members is usually significantly smaller than the dimension of the state space, $N \ll n$, and the covariance matrix estimate is affected by sampling errors. In order to alleviate these errors, and probabilistically inaccurate assumptions about the statistical Kalman gain \cref{eqn:statKgain}, methods such as inflation~\cite{Anderson_2001_EAKF,tong2015nonlinear,anderson2007adaptive,popov2020explicit}, localization~\cite{popov2019bayesian,Anderson_2012_localization,petrie2008localization,Sandu_2018_Covariance-Cholesky}, and covariance shrinkage~\cite{Sandu_2015_covarianceShrinkage,Sandu_2019_Covariance-parallel, popov2020stochastic,Sandu_2017_parallel-EnKF} have been developed. 
%
\section{Spaces, projections, information, and control variates.}
\label{sec:podcontrol}

Bayes' rule requires to use all information  information in the inference process \cite{jaynes2003probability}; in particular, if additional information about the dynamics of the system is known, it must be used in the inference in order to increase confidence in the inference results.  The assumption of linearity (in KF and linear control variates), however, precludes the inclusion of important information about the manifold on which nonlinear model dynamics live. Reduced order models (ROMs) construct linear subspaces that capture the most important (in some well-defined sense) features and modes of the full order dynamics. For this reason we seek to build enhanced ensemble Kalman filters with ROMs as control variates.  

To this end we consider finite dimensional random variables. Without loss of generality, the principal variate $\chi$ lives in the space $\mathcal{S}_\chi$ = $\mathbb{R}^n$, endowed with the canonical Euclidean basis and the canonical Euclidean inner product $\inner{\cdot}{\cdot}_{\mathcal{S}_\chi}$ = $\inner{\cdot}{\cdot}_{\Re^n}$. The control and ancillary variates ($\hat\upsilon$ and $\upsilon$, respectively) are vectors in $\mathcal{S}_\upsilon$ = $\mathbb{R}^r$,  endowed with the  canonical Euclidean basis and the canonical inner product $\inner{\cdot}{\cdot}_{\mathcal{S}_\upsilon}$ = $\inner{\cdot}{\cdot}_{\Re^r}$.

We consider the natural idea of utilizing a control variate that is the projection of the principal variate $\chi \in \mathcal{S}_\chi$ onto an $r$-dimensional subspace $\widehat{\mathcal{S}}_\upsilon \subset \mathcal{S}_\chi$ that captures the dominant features of the nonlinear dynamics of the system. 

We identify the space of control and ancillary variates $\mathcal{S}_\upsilon$ =  $\mathbb{R}^r$ with an $r$-dimensional subspace $\widehat{\mathcal{S}}_\upsilon \subset \mathcal{S}_{\chi}$ equipped with the $\inner{\cdot}{\cdot}_{\widehat{\mathcal{S}}_\upsilon}$ = $\inner{\cdot}{\cdot}_{\*M_{\upsilon}}$ inner product, where $\*M_{\upsilon} \in \Re^{n \times n}$ is a s.p.d. matrix. Specifically, let $\*\Phi = [\Phi_1,\dots, \Phi_n]  \in\mathbb{R}^{n\times n}$ be an $\*M_{\upsilon}$-orthogonal basis of $\Re^n$; we identify the control space with the span of the first $r$ vectors in the basis $\widehat{\mathcal{S}}_\upsilon = \textnormal{Span}\{\Phi_1,\dots, \Phi_r\}$. 
Consider two vectors in the control space $\*u,\*v \in \Re^r$; their representations as $n$-dimensional vectors in $\mathcal{S}_\upsilon$ are $\*\Phi_r\,\*u$ and $\*\Phi_r\,\*v$, respectively, where $\*\Phi_r = [\Phi_1,\dots, \Phi_r]  \in\mathbb{R}^{n\times r}$. The dot-product is preserved in both representations:
\begin{equation}
    \inner{\*u}{\*v}_{\Re^r} = \*u^{\intercal}\,\*v = \*u^{\intercal}\,\*\Phi_r^{\intercal}\,\*M_{\upsilon}\,\*\Phi_r\,\*v
   = \inner{\*\Phi_r\,\*u}{\*\Phi_r\,\*v}_{\*M_{\upsilon}}.
\end{equation}
\begin{remark}
There is no loss of generality with the  above formulation. Consider the control space $\mathcal{S}_\upsilon$ =  $\Re^r$ endowed with the general inner product $\inner{\cdot}{\cdot}_{\*N_\upsilon}$, and the $\*N_\upsilon$-orthonormal basis $\{\varphi_1,\dots,\varphi_r\}$. Identify the control space with the $r$-dimensional subspace $\widehat{\mathcal{S}}_\upsilon = \operatorname{span}\{ \widehat{\Phi}_1,\dots, \widehat{\Phi}_r\} \subset \Re^n$, where $\widehat{\*\Phi}_r = [\widehat{\Phi}_1,\dots, \widehat{\Phi}_r]  \in\mathbb{R}^{n\times r}$ are the control basis vectors represented as vectors in $\Re^n$. The following change of basis casts this general case in our formulation:
\[
\*\Phi_r = \*M_{\upsilon}^{-\frac{1}{2}}\,\widehat{\*\Phi}_r\,\*N_\upsilon^{-\frac{1}{2}}, \quad \*\Phi_r^\intercal\, \*M_\upsilon\,\*\Phi_r = \mathbf{I}_{r \times r},
\quad \textnormal{Range}(\*\Phi_r) = \textnormal{Range}(\widehat{\*\Phi}_r) = \widehat{\mathcal{S}}_\upsilon,
\]
where $\*M = \*M^{\frac{1}{2}}\,\*M^{\frac{\intercal}{2}}$ is a square root factorization of the s.p.d. matrix $\*M$.
\end{remark}

\begin{remark}
The transformed vectors $\*\Psi = \*M_{\upsilon}^{\frac{\intercal}{2}}\,\*\Phi$ form an orthonormal basis of $\mathcal{S}_\chi$, and the first $r$ vectors of $\*\Psi$ form an orthonormal basis of $\widehat{\mathcal{S}}_\upsilon$ w.r.t. the Euclidian dot-product:
\[
\*\Psi_r = \*M_{\upsilon}^{\frac{\intercal}{2}}\,\*\Phi_r\, \quad \*\Psi_r^\intercal\, \*\Psi_r = \mathbf{I}_{r \times r},
\quad \textnormal{Range}(\*\Psi_r) = \textnormal{Range}(\*\Phi_r) = \widehat{\mathcal{S}}_\upsilon.
\]
\end{remark}
In summary, a control vector $\*u \in \Re^r$ is represented in the principal space $\Re^n$ as:
\begin{subequations}
\label{eqn:projection-operators}
\begin{equation}
\*x  = \*\Phi_r\, \*u. 
\end{equation}
Viceversa, a vector in the principal space $\*x \in \Re^n$ is projected $\*M_{\upsilon}$-orthogonally onto the control space $\Re^r$ as follows:
\begin{equation}
\*\Phi_r^\ast \coloneqq \*\Phi_r^\intercal\,\*M_{\upsilon}, \quad 
\*u = \*\Phi_r^\ast\,\*x = \*\Phi_r^\intercal\,\*M_{\upsilon}\,\*x. 
\end{equation}
\end{subequations}
We note that $\*\Phi_r^\ast  \in \Re^{r \times n}$ is the adjoint operator of $\*\Phi_r \in \Re^{n \times r}$ with respect to the control dot-products: $\langle \*\Phi_r^\ast \*x, \*u \rangle_{\Re^r} = \langle \*x, \*\Phi_r \*u \rangle_{\*M_{\upsilon}}$

\begin{remark}
The method of snapshots \cite{sirovich1987turbulence1} that underpins the ROM finds an $\*M_{\upsilon}$-orthonormal basis $\Phi_1,\dots, \Phi_n$ of  $\Re^n$ with vectors sorted in decreasing order of importance (e.g., with respect to the energy of the dynamical system solution projected onto that vector). In the method of snapshots $\Phi_i$ are the eigenvectors of the temporal covariance of the dynamics (discretely approximated by the snapshot covariance). A full-state vector $\*x = \sum_{i=1}^n u_i \Phi_i \in \Re^n$  is given a reduced order approximation by keeping only the main $r$ components: $\*x_r = \sum_{i=1}^r u_i \Phi_i$. This is equivalent to projecting the vector $\*M_{\upsilon}$-orthonormally onto the first $r$ basis vectors, $\*u = [u_1 \dots u_r]^\intercal = \*\Phi_r^\ast\,\*x$.
\end{remark}


Consider the case where the ensemble size of the principal variate-control variate pair is insufficient to accurately determine their statistical covariances. In this case one cannot accurately determine the statistical analogue of the optimal gain~\cref{eqn:sgainfullest} at any given point in time. To overcome this difficulty we leverage the projection operators defined in this section in order to describe both the control variate and the corresponding optimal gain.

\begin{theorem}
\label{thm:optimalavggain}
Let the control variate \eqref{eqn:linearcv} be the projection of the principal variate over $\widehat{\mathcal{S}}_\upsilon$, 
\begin{equation}
\label{eqn:projected-control}
\hat\upsilon = \*\Phi_r^*\,\chi. 
\end{equation}
The principal and control variates in the $\*\Phi$ basis read:
\begin{equation}
\label{eqn:variates-in-Phi}
\chi = \*\Phi_r\, \*\Phi_r^*\, \chi + (\mathbf{I} -  \*\Phi_r\, \*\Phi_r^*) \, \chi = \*\Phi_r\,\hat\upsilon  + \Delta{\chi}_r.
\end{equation}
The optimal gain for the total variate \eqref{eqn:optimallinearcv} is:
\begin{equation}
\label{eqn:projected-control-optimal-S}
 \*S^{\textnormal{opt}}  = \*\Phi_r +  \Cov{\Delta{\chi}_r}{\hat\upsilon}\,(\Cov{\hat\upsilon}{\hat\upsilon})^{-1}.
\end{equation}
Using the approximate gain matrix
\begin{equation}
\label{eqn:projected-control-S}
    \*S = \*\Phi_r,
\end{equation}
in \eqref{eqn:linearcv} removes the variability of $\chi$ within $\widehat{\mathcal{S}}_\upsilon$.
\end{theorem}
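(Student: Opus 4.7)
The plan is to prove the three claims in order; they reduce to linear algebra once the projection structure is identified. First, I would verify that $\*\Phi_r\,\*\Phi_r^\ast$ acts as the $\*M_\upsilon$-orthogonal projector onto $\widehat{\mathcal{S}}_\upsilon$. This follows from the $\*M_\upsilon$-orthonormality relation $\*\Phi_r^\intercal\,\*M_\upsilon\,\*\Phi_r = \mathbf{I}_{r\times r}$ noted in the preceding remarks, via the short idempotency check $(\*\Phi_r\,\*\Phi_r^\ast)^2 = \*\Phi_r\,(\*\Phi_r^\intercal\,\*M_\upsilon\,\*\Phi_r)\,\*\Phi_r^\ast = \*\Phi_r\,\*\Phi_r^\ast$. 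The decomposition \cref{eqn:variates-in-Phi} is then simply the trivial identity $\chi = \*\Phi_r\,\*\Phi_r^\ast\,\chi + (\mathbf{I} - \*\Phi_r\,\*\Phi_r^\ast)\,\chi$ combined with the definition $\hat\upsilon = \*\Phi_r^\ast\,\chi$ from \cref{eqn:projected-control}, upon naming the second summand $\Delta\chi_r$.

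For the optimal gain, I would invoke \cref{lem:linearcv} and apply bilinearity of the covariance to the decomposition from step one, pulling the deterministic matrix $\*\Phi_r$ outside:
\[
\Cov{\chi}{\hat\upsilon} = \Cov{\*\Phi_r\,\hat\upsilon}{\hat\upsilon} + \Cov{\Delta\chi_r}{\hat\upsilon} = \*\Phi_r\,\Cov{\hat\upsilon}{\hat\upsilon} + \Cov{\Delta\chi_r}{\hat\upsilon}.
\]
Right-multiplying by $\Cov{\hat\upsilon}{\hat\upsilon}^{-1}$ yields \cref{eqn:projected-control-optimal-S} immediately.

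For the interpretation claim, I would substitute $\*S = \*\Phi_r$ into the total variate formula \cref{eqn:linearcv} and simplify using the decomposition to obtain $\zeta = \chi - \*\Phi_r(\hat\upsilon - \Mean{\hat\upsilon}) = \Delta\chi_r + \*\Phi_r\,\Mean{\hat\upsilon}$. Because $\*\Phi_r^\ast\,\Delta\chi_r = (\*\Phi_r^\intercal\,\*M_\upsilon - \*\Phi_r^\intercal\,\*M_\upsilon\,\*\Phi_r\,\*\Phi_r^\intercal\,\*M_\upsilon)\,\chi = 0$ by construction of the projector, all stochasticity of $\zeta$ lives in the $\*M_\upsilon$-orthogonal complement of $\widehat{\mathcal{S}}_\upsilon$, while the component of $\zeta$ inside $\widehat{\mathcal{S}}_\upsilon$ is the deterministic vector $\*\Phi_r\,\Mean{\hat\upsilon}$. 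Hence the variability of $\chi$ within $\widehat{\mathcal{S}}_\upsilon$ has been eliminated.

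There is no genuine obstacle in this argument. The only conceptual point worth emphasizing is that the approximate gain \cref{eqn:projected-control-S} differs from the optimal one \cref{eqn:projected-control-optimal-S} exactly by the term $\Cov{\Delta\chi_r}{\hat\upsilon}\,\Cov{\hat\upsilon}{\hat\upsilon}^{-1}$, which vanishes whenever the out-of-subspace residual is uncorrelated with its in-subspace projection. This justifies using $\*\Phi_r$ as an a priori gain whenever the ROM basis captures the dominant correlated modes, and isolates the residual cross-covariance as the only piece that still needs to be estimated empirically.
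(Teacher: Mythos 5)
Your proposal is correct and follows essentially the same route as the paper's proof: both derive $\Cov{\chi}{\hat\upsilon} = \*\Phi_r\,\Cov{\hat\upsilon}{\hat\upsilon} + \Cov{\Delta\chi_r}{\hat\upsilon}$ from the projector decomposition and substitute into \cref{lem:linearcv}, then show that with $\*S = \*\Phi_r$ the total variate collapses to $\Delta\chi_r + \*\Phi_r\,\Mean{\hat\upsilon}$, whose random part is $\*M_\upsilon$-orthogonal to $\widehat{\mathcal{S}}_\upsilon$. Your explicit idempotency check for $\*\Phi_r\,\*\Phi_r^\ast$ and the verification $\*\Phi_r^\ast\,\Delta\chi_r = 0$ are details the paper leaves implicit, but they do not change the argument.
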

\begin{proof}
From \eqref{eqn:variates-in-Phi} we have:
\begin{equation*}
(\chi -\Mean{\chi})(\hat\upsilon - \Mean{\hat\upsilon})^T =  \*\Phi_r\,(\hat\upsilon - \Mean{\hat\upsilon})(\hat\upsilon - \Mean{\hat\upsilon})^T + (\Delta{\chi}_r - \Mean{\Delta{\chi}_r})(\hat\upsilon - \Mean{\hat\upsilon})^T.
\end{equation*}
Taking expected values and replacing in the optimal gain formula \eqref{eqn:optimallinearcv} 
%
%
gives \eqref{eqn:projected-control-optimal-S}.
From \eqref{eqn:linearcv}:
\begin{equation*}
\begin{split}
\zeta &= \chi - \*S\,(\hat\upsilon - \Mean{\hat\upsilon}) = \left(\mathbf{I} - \*S\,\*\Phi_r^* \right)\,\chi + \*S\,\Mean{\hat\upsilon}
= \left(\*\Phi_r - \*S \right)\,\hat{\upsilon} +  \Delta{\chi}_r + \*S\,\Mean{\hat\upsilon}, 
\end{split}   
\end{equation*}
and the approximate gain \eqref{eqn:projected-control-S} leads to $\Cov{\zeta}{\zeta}=\Cov{\Delta{\chi}_r}{\Delta{\chi}_r}$. Since $\Delta{\chi}_r = (\mathbf{I} -  \*\Phi_r\, \*\Phi_r^*) \, \chi$ is $\mathbf{M}_\upsilon$-orthogonal to $\widehat{\mathcal{S}}_\upsilon$ the variability of $\chi$ within $\widehat{\mathcal{S}}_\upsilon$ has been removed.
\end{proof}
A consequence of \cref{thm:optimalavggain} is that the approximate gain \eqref{eqn:projected-control-S} is constant in time.

\begin{remark}[Approximation of optimal gain]
\label{rem:approx-optimal-gain}
If the mean of the control variate is unknown, then an ancillary variate is used \cref{eqn:CV-independent-mean}. 
If the ancillary variate has a second moment that is equal to that of the control variate, then by~\cref{eqn:optimal-gain-half} the optimal gain is approximately $\*S \approx \*\Phi_r/2$.
\end{remark}

\begin{remark}[Gain error]
In \eqref{eqn:variates-in-Phi} the reduced order approximation error is:
\[
\Delta{\chi}_r = \chi - \*\Phi_r\,\hat\upsilon.
\]
The fixed gain \eqref{eqn:projected-control-S} is a good approximation of the optimal gain \eqref{eqn:projected-control-optimal-S} when the term $ \Cov{\Delta{\chi}_r}{\hat\upsilon}\,(\Cov{\hat\upsilon}{\hat\upsilon})^{-1}$ is small, i.e., when the covariance between the approximation error $\Delta{\chi}_r$ and the reduced order projection $\hat\upsilon$ is small relative to the covariance of the reduced order projection.
\end{remark}

\begin{remark}
Upper bounds for the error between the forecasted full model state, and forecasted reduced order model state that functions as a control variate, are available in the literature \cite{iliescu2014variational,KV01,singler2014new}.
%
%
Assume that the deviations from the mean of $\Delta{\chi}_r$ and $\hat\upsilon$ are bounded by a moderate constant times the respective means.
A simple scale analysis in \eqref{eqn:projected-control-optimal-S} shows that 
\[
\Vert \*S^{\textnormal{opt}}  - \*S \Vert = 
\Vert \Cov{\Delta{\chi}_r}{\hat\upsilon}\,(\Cov{\hat\upsilon}{\hat\upsilon})^{-1} \Vert \sim \frac{\Vert \Delta{\chi}_r \Vert}{\Vert  \hat\upsilon  \Vert} =
 \frac{\Vert \Delta{\chi}_r \Vert}{\Vert \chi - \Delta{\chi}_r \Vert} 
\le \frac{\Vert \Delta{\chi}_r \Vert/\Vert \chi \Vert}{1 - (\Vert \Delta{\chi}_r \Vert/\Vert \chi \Vert)},
\]
so the smaller the ROM error is, the closer the fixed approximate gain \eqref{eqn:projected-control-S} is to the optimal one \eqref{eqn:projected-control-optimal-S}.
\end{remark}

\begin{remark}
The discussion in this section applies with minor changes to the infinite dimensional case. Consider an infinite dimensional principal space $\mathcal{S}_\chi$ with an inner product $\inner{\cdot}{\cdot}_{\mathcal{S}_\chi}$, and a possibly infinite-dimensional control space  $\mathcal{S}_\upsilon$ with the inner product $\inner{\cdot}{\cdot}_{\mathcal{S}_\upsilon}$. Consider a second dot product $\inner{\cdot}{\cdot}_{\widehat{\mathcal{S}}_\upsilon}$ on $\mathcal{S}_\chi$ (motivated by the physics of the problem). A linear bounded operator $\*\Phi_r : \mathcal{S}_\upsilon \to \mathcal{S}_\chi$  links the control and primal spaces; let $\widehat{\mathcal{S}}_\upsilon = \textnormal{Range}\{ \*\Phi_r \}$.  The adjoint operator $\Phi_r^\ast:  \mathcal{S}_\chi  \to \mathcal{S}_\upsilon$ defined by $\langle \*\Phi_r^\ast \*x, \*u \rangle_{\mathcal{S}_\upsilon} = \langle \*x, \*\Phi_r \*u \rangle_{\widehat{\mathcal{S}}_\upsilon}$ gives the control variate relation \eqref{eqn:projected-control}. 
\end{remark}

\section{Multifidelity EnKF with ROM control variates.}
\label{sec:twospacepodenkf}

We now build a multifidelity EnKF using the multivariate control variate framework, with the transitions between fidelities defined in terms of optimal gains, leading to the multifidelity approach, which is different than the MLEnKF idea discussed in \cite{Hoel_2016_MLEnKF}.

For ease of exposition, a two-fidelity variant of the MFEnKF with ROM control variates is discussed first, and a telescopic generalization to $\!L$ fidelities is presented later. The schematic working of a two levels of fidelity MFEnKF is illustrated in Figure \ref{fig:MFEnKF}.
\begin{figure}[t]
\centering
  \begin{tikzpicture}

\node[draw=none, text=red, align=center] at (0.75,2) {Full order\\ model\\ space};
\node[draw=none, text=red, align=center] at (2.5,2.5) {$X^{\|a}_{i-1}$};
\node[draw=black, fill=red, circle, minimum size=1.0em](Xa1) at (2,2){};
\node[draw=black, fill=red, circle, minimum size=1.0em](Xa2) at (3,2){};

\node[draw=none, text=RoyalBlue , align=center] at (0.75,-0.75) {Reduced\\ order\\ model\\ space};
\node[draw=none, text=RoyalBlue, align=center] at (2.5,0) {$\hat{U}^{\|a}_{i-1}$};
\node[draw=black, fill=RoyalBlue , circle, minimum size=0.5em, inner sep=0em](Uha1) at (2,0.5){};
\node[draw=black, fill=RoyalBlue , circle, minimum size=0.5em, inner sep=0em](Uha2) at (3,0.5){};

\draw [->, thick] (Xa1) -- (Uha1);
\draw [->, thick] (Xa2) -- (Uha2);
\node at (2.5, 1.25) {$\*\Phi^*_r$};

\node[draw=black, fill=RoyalBlue , circle, minimum size=0.5em, inner sep=0em](Ua11) at (2,-1){};
\node[draw=black, fill=RoyalBlue , circle, minimum size=0.5em, inner sep=0em](Ua12) at (2.333,-1){};
\node[draw=black, fill=RoyalBlue , circle, minimum size=0.5em, inner sep=0em](Ua13) at (2.6666,-1){};
\node[draw=black, fill=RoyalBlue , circle, minimum size=0.5em, inner sep=0em](Ua14) at (3,-1){};

\node[draw=black, fill=RoyalBlue , circle, minimum size=0.5em, inner sep=0em](Ua21) at (2,-1.333){};
\node[draw=black, fill=RoyalBlue , circle, minimum size=0.5em, inner sep=0em](Ua22) at (2.333,-1.333){};
\node[draw=black, fill=RoyalBlue , circle, minimum size=0.5em, inner sep=0em](Ua23) at (2.6666,-1.333){};
\node[draw=black, fill=RoyalBlue , circle, minimum size=0.5em, inner sep=0em](Ua24) at (3,-1.333){};

\node[draw=black, fill=RoyalBlue , circle, minimum size=0.5em, inner sep=0em](Ua31) at (2,-1.666){};
\node[draw=black, fill=RoyalBlue , circle, minimum size=0.5em, inner sep=0em](Ua32) at (2.333,-1.666){};
\node[draw=black, fill=RoyalBlue , circle, minimum size=0.5em, inner sep=0em](Ua33) at (2.6666,-1.666){};
\node[draw=black, fill=RoyalBlue , circle, minimum size=0.5em, inner sep=0em](Ua34) at (3,-1.666){};

\node[draw=none, text=RoyalBlue, align=center] at (2.5,-2.333) {$U^{\|a}_{i-1}$};

\draw[dashed, very thick, fill=gray!10] ($(5,-2.666) - (1em, 0)$) rectangle ($(6,2.9) + (1em, 0)$);

\node[draw=none, text=red, align=center] at (5.5,2.5) {$X^{\|f}_{i}$};
\node[draw=black, fill=red, circle, minimum size=1.0em](Xf1) at (5,2){};
\node[draw=black, fill=red, circle, minimum size=1.0em](Xf2) at (6,2){};

\node[draw=none, text=RoyalBlue, align=center] at (5.5,0) {$\hat{U}^{\|f}_{i}$};
\node[draw=black, fill=RoyalBlue , circle, minimum size=0.5em, inner sep=0em](Uhf1) at (5,0.5){};
\node[draw=black, fill=RoyalBlue , circle, minimum size=0.5em, inner sep=0em](Uhf2) at (6,0.5){};

\node[draw=black, fill=RoyalBlue , circle, minimum size=0.5em, inner sep=0em](Uf11) at (5,-1){};
\node[draw=black, fill=RoyalBlue , circle, minimum size=0.5em, inner sep=0em](Uf12) at (5.333,-1){};
\node[draw=black, fill=RoyalBlue , circle, minimum size=0.5em, inner sep=0em](Uf13) at (5.6666,-1){};
\node[draw=black, fill=RoyalBlue , circle, minimum size=0.5em, inner sep=0em](Uf14) at (6,-1){};

\node[draw=black, fill=RoyalBlue , circle, minimum size=0.5em, inner sep=0em](Uf21) at (5,-1.333){};
\node[draw=black, fill=RoyalBlue , circle, minimum size=0.5em, inner sep=0em](Uf22) at (5.333,-1.333){};
\node[draw=black, fill=RoyalBlue , circle, minimum size=0.5em, inner sep=0em](Uf23) at (5.6666,-1.333){};
\node[draw=black, fill=RoyalBlue , circle, minimum size=0.5em, inner sep=0em](Uf24) at (6,-1.333){};

\node[draw=black, fill=RoyalBlue , circle, minimum size=0.5em, inner sep=0em](Uf31) at (5,-1.666){};
\node[draw=black, fill=RoyalBlue , circle, minimum size=0.5em, inner sep=0em](Uf32) at (5.333,-1.666){};
\node[draw=black, fill=RoyalBlue , circle, minimum size=0.5em, inner sep=0em](Uf33) at (5.6666,-1.666){};
\node[draw=black, fill=RoyalBlue , circle, minimum size=0.5em, inner sep=0em](Uf34) at (6,-1.666){};
\node[draw=none, text=RoyalBlue, align=center] at (5.5,-2.333) {$U^{\|f}_{i}$};

\node[draw=none,align=center] at (3.9,3.5) {Forecast\\ step};
\draw [->, ultra thick, draw=red] ($(Xa2) + (1.25em, 0)$) -- ($(Xf1) - (1.25em, 0)$) node[midway,above=0.333em, text=red] {$\!M^X_{i-1,i}$};
\draw [->, thick, draw=RoyalBlue] ($(Uha2) + (1.25em, 0)$) -- ($(Uhf1) - (1.25em, 0)$) node[midway,above=0.333em, text=RoyalBlue] {$\!M^U_{i-1,i}$};
\draw [->, thick, draw=RoyalBlue] ($(Ua24) + (1.25em, 0)$) -- ($(Uf21) - (1.25em, 0)$) node[midway,above=0.333em, text=RoyalBlue] {$\!M^U_{i-1,i}$};

\node[draw=none, text=red, align=center] at (8.5,2.5) {$X^{\|a}_{i}$};
\node[draw=black, fill=red, circle, minimum size=1.0em](Xaa1) at (8,2){};
\node[draw=black, fill=red, circle, minimum size=1.0em](Xaa2) at (9,2){};


\node[draw=black, fill=RoyalBlue , circle, minimum size=0.5em, inner sep=0em](Uaa11) at (8,-1){};
\node[draw=black, fill=RoyalBlue , circle, minimum size=0.5em, inner sep=0em](Uaa12) at (8.333,-1){};
\node[draw=black, fill=RoyalBlue , circle, minimum size=0.5em, inner sep=0em](Uaa13) at (8.6666,-1){};
\node[draw=black, fill=RoyalBlue , circle, minimum size=0.5em, inner sep=0em](Uaa14) at (9,-1){};

\node[draw=black, fill=RoyalBlue , circle, minimum size=0.5em, inner sep=0em](Uaa21) at (8,-1.333){};
\node[draw=black, fill=RoyalBlue , circle, minimum size=0.5em, inner sep=0em](Uaa22) at (8.333,-1.333){};
\node[draw=black, fill=RoyalBlue , circle, minimum size=0.5em, inner sep=0em](Uaa23) at (8.6666,-1.333){};
\node[draw=black, fill=RoyalBlue , circle, minimum size=0.5em, inner sep=0em](Uaa24) at (9,-1.333){};

\node[draw=black, fill=RoyalBlue , circle, minimum size=0.5em, inner sep=0em](Uaa31) at (8,-1.666){};
\node[draw=black, fill=RoyalBlue , circle, minimum size=0.5em, inner sep=0em](Uaa32) at (8.333,-1.666){};
\node[draw=black, fill=RoyalBlue , circle, minimum size=0.5em, inner sep=0em](Uaa33) at (8.6666,-1.666){};
\node[draw=black, fill=RoyalBlue , circle, minimum size=0.5em, inner sep=0em](Uaa34) at (9,-1.666){};
\node[draw=none, text=RoyalBlue, align=center] at (8.5,-2.333) {$U^{\|a}_{i}$};


\node[draw=none,align=center] at (7.1,3.5) {Analysis\\ step};
\draw [->, thick] ($(Xf2) + (1.25em, 0)$) -- ($(Xaa1) - (1.25em, 0)$) node[midway, below=0.333em, name=KX, text=red] {$\widetilde{\*K}_i$};
\draw [->, thick] ($(Uf24) + (1.25em, 0)$) -- ($(Uaa21) - (1.25em, 0)$) node[midway, above=0.333em, name=KU, text=RoyalBlue] {$\*\Phi^*_r\widetilde{\*K}_i$};

\draw[rounded corners=1em, ->, thick]($(Uhf2) + (1.25em, 0)$) -| (KX);
\draw[rounded corners=1em, ->, thick]($(Uhf2) + (1.25em, 0)$) -| (KU);

\end{tikzpicture}
  \caption{Conceptual working of a two-fidelity MFEnKF with a ROM control variate.}
  \label{fig:MFEnKF}
\end{figure}
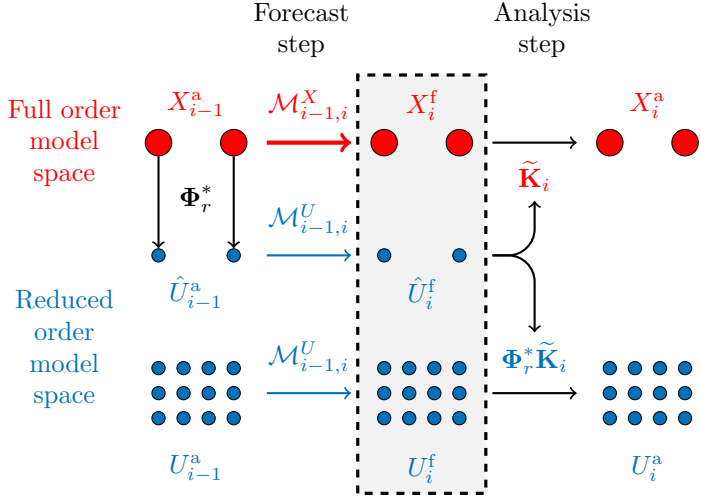

\begin{assumption}[Setting for constructing MFEnKF]
\begin{enumerate}
\item Two numerical models \eqref{eqn:umodel} of the same natural phenomenon are available. The first one is the full order model, that propagates a state $\x \in \Re^n$ in the full order space via the FOM dynamics $\Model^{\x}$. The second one is a reduced order model, that propagates a reduced order state $\u \in \Re^r$ via the ROM dynamics $\Model^{\u}$. The distribution of $\x$ embodies our knowledge about the state represented in the FOM space, and the distribution of $\u$ our knowledge about the state (represented in the ROM space).
\item Projection operators \eqref{eqn:projection-operators} are available, that map the full space onto the reduced one, $U = \*\Phi_r^* X$, and the reduced space into the full one, $X = \*\Phi_r U$, such that $\*\Phi_r^* \*\Phi_r = \mathbf{I}_r$.
\item A full space observation operator $\Hobs_i$ maps the FOM state space onto the observation space. A reduced space observation operator $\Hobs_{r,i}$ maps the ROM state space onto the observation space. The observation errors have covariances $\Cov{\erro_i}{\erro_i}$ and $\Cov{\erro_i^{\u}}{\erro_i^{\u}}$, respectively. The reduced space observation operator is assumed to be consistent with the full space observation operator, in the sense that:
\begin{align*}
    \Hobs_{r,i}(\ub_i) &\approx \Hobs_i(\boldsymbol{\Phi}_r \ub_i)
    \quad \Rightarrow \quad \Hobs'_{r,i}|_{\ub_i} \approx \Hobs_i'|_{\boldsymbol{\Phi}_r \ub_i}\, \boldsymbol{\Phi}_r.
    \end{align*}
\end{enumerate}
\end{assumption}

Our goal is to build an ensemble Kalman filter that takes advantage of two models, and can leverage the higher accuracy of the FOM and the lower cost of the ROM by using small ensembles of FOM runs in conjunction with large ensembles of ROM runs. A first possible approach is to use EnKF in the FOM space and employ multimodel ensembles to build empirical covariances. However, building empirical moments from ensemble members of different sizes is challenging. For example, one needs to project the ROM ensemble members into the full space and carry out the inference there.  A second possible approach is to ``stack'' the two models, and obtain a supermodel that advances the combined ROM and FOM states; ``stack'' the observation operators; and apply EnKF in the aggregated state space. This second approach, however, cannot employ different numbers of FOM and ROM ensemble members, and is likely to suffer when the ROM and FOM solutions are poorly correlated. A third approach is multilevel EnKF (MLEnKF) \cite{Hoel_2016_MLEnKF,Chernov_2017_MLEnKF}, where EnKF is applied in the FOM space, and ROM runs are (only) used to improve the empirical covariance estimates.

MFEnKF takes advantage of the availability of two models by employing a control variate framework. 
The FOM state $\x \equiv \chi$ is the principal variate. The ROM state $\uh \equiv \hat\upsilon$ is the control variate, and another ROM state $\u \equiv \upsilon$ its ancillary variate. We focus on projection control variates $\uh = \*\Phi_r^* \x$. The  total variates $\zb$ and $\za$ represent our combined prior and posterior knowledge, respectively, at time $t_i$ through the linear control variate technique~\cref{eqn:linearcv}:
\begin{equation}
\label{eqn:z-controlvar}
    \zb_i = \xb_i - \mathbf{S}_i\,(\uhb_i - \ub_i), \qquad
    \za_i = \xa_i - \mathbf{S}_i\,(\uha_i - \ua_i).
\end{equation}
The main idea of MFEnKF is to apply EnKF \cref{eqn:enkfanalysis} to the total variate \eqref{eqn:z-controlvar}, under the following restrictions.
\begin{assumption}[Restrictions in constructing MFEnKF]
\begin{enumerate}
\item One can run the FOM and the ROM, but there is no dynamical model associated with the total variate \eqref{eqn:z-controlvar}.
Consequently, one does not sample $Z$ directly. Rather, the uncertainty in the total variate is represented by the three ensembles of principal, control, and ancillary variates:
\begin{equation}
\label{eqn:three-ensembles}
    \zb \colonapprox \left(\En{\xb}, \En{\uh^\|b}, \En{\ub}\right), \quad 
    \za \colonapprox \left(\En{\xa}, \En{\uh^\|a}, \En{\ua}\right).
\end{equation}
The MFEnKF forecast step propagates the three ensembles forward in time to obtain a representation of the prior total variate, and the analysis step produces three ensembles representing the posterior total variate. 
\item One can observe the principal variate using the full space observation operator $\Hobs_i$, and the control and ancillary variates using the reduced space observation operator $\Hobs_{r,i}(\ub_i)$. However, one does not observe the total variate $\zb_i$ directly.  Instead, we consider the following indirect observation operator:
\begin{equation}
\label{eqn:hofz-approximate}
    \overline{\Hobs}_i(\zb_i) = \Hobs_i(\xb_i) - \mathbf{T}_i\, \left(\Hobs_{r,i}(\uhb_i) - \Hobs_{r,i}(\ub_i) \right).
\end{equation}
%
%
%
We are interested in indirect observations \eqref{eqn:hofz-approximate} that approximate, to first order, the nonlinear observation operator $\Hobs_i$ applied to the total variate \cref{eqn:z-controlvar}:
\begin{subequations}
\begin{equation}
\label{eqn:hofz-approximate-H}
\begin{split}
    \Hobs_i(\zb_i) &= \Hobs_i(\xb_i) - \Hobs_i'|_{\xb_i}\, \mathbf{S}_i\, (\uhb_i - \ub_i)  +\text{h.o.t.},\\
    \overline{\Hobs}_i(\zb_i) &= \Hobs_i(\xb_i) - \mathbf{T}_i\, \Hobs'_{r,i}|_{\boldsymbol{\Phi}_r^\ast\,\xb_i}(\uhb_i - \ub_i)+\text{h.o.t.}
\end{split}    
\end{equation}
This is achieved by choosing a matrix $\mathbf{T}_i$ such that
\begin{equation}
\label{eqn:hofz-approximate-T}
    \Hobs_i'|_{\xb_i}\, \mathbf{S}_i  \approx  \mathbf{T}_i\, \Hobs'_{r,i}|_{\boldsymbol{\Phi}_r^\ast\xb_i}
    \approx  \mathbf{T}_i\, \Hobs'_{i}|_{\boldsymbol{\Phi}_r \boldsymbol{\Phi}_r^\ast\xb_i}\, \boldsymbol{\Phi}_r.
\end{equation}
\end{subequations}
\end{enumerate}
\end{assumption}

\subsection{Forecast step.}

In order to ensure that the analysis control variate $\uha_{i-1}$  is highly correlated with the corresponding principal variate $\xa_{i-1}$, $\uha_{i-1}$ is not obtained through EnKF analysis  \cref{eqn:enkfanalysis}; rather, it is obtained by projecting the principal variate (the FOM analysis state) onto the reduced space:
\begin{subequations}
\label{eqn:MFEnKF-forecast-step}
\begin{equation}
\label{eqn:MFEnKF-forecast-step-projection}
\uha_{i-1} \coloneqq \*\Phi_r^*\,\xa_{i-1}.
\end{equation}
The MFEnKF forecast step propagates each of the three analysis ensembles \eqref{eqn:three-ensembles} at time $t_{i-1}$ forward to time $t_{i}$:
\begin{equation}
\label{eqn:MFEnKF-forecast-step-model}
\begin{split}
    \xb[k]_i  &= \Model^{\x}_{i-1,i}(\xa[k]_{i-1}),\qquad
    \uhb[k]_i = \Model^{\u}_{i-1,i}(\uha[k]_{i-1}),\qquad k=1,\dots,N_X; \\
    \ub[k]_i &= \Model^{\u}_{i-1,i}(\ua[k]_{i-1}),\qquad k=1,\dots,N_U.
    \end{split}
\end{equation}
\end{subequations}
The MFEnKF forecast step \eqref{eqn:MFEnKF-forecast-step} is illustrated in Figure \ref{fig:MFEnKF}.

\begin{remark}[Assumption of independence] 
\label{rem:assume-independence}
In the control variate framework the control $\uhb_i$ and ancillary $\ub_i$ variates are independent random variables. In MFEnKF the analysis step will correlate the principal $\xb_i$ and the ancillary variates. Nevertheless, using typical statistical Kalman gain independence assumptions in the EnKF, we will treat $\uhb_i$ and $\ub_i$ as independent in MFEnKF calculations.
\end{remark}

\begin{remark}[Forecast step and ROM bias] 
For linear models 
another control variate highly correlated with the principal variate can be obtained by direct projection
\begin{equation}
\label{eqn:FOM-control-var}
    \uhb_i = \*\Phi_r^*\,\xb_i,
\end{equation}
saving the additional ROM runs for $\uhb_i$ required by \eqref{eqn:MFEnKF-forecast-step-model}.
In general, however, ROMs are affected by systematic bias
\begin{equation*}
    \Model^{\u}_{i-1,i}( \*\Phi_r^*\,\x_{i-1}) - \*\Phi_r^*\,\Model^{\x}_{i-1,i}(\x_{i-1}) = \beta_i.
\end{equation*}
While the ancillary variate $\ub_i$ computed using the ROM  \eqref{eqn:MFEnKF-forecast-step-model} is affected by this bias, the control variate \eqref{eqn:FOM-control-var} obtained by direct projection is not, and in general \eqref{eqn:FOM-control-var} violates the underlying probabilistic assumptions~\cref{eqn:z-controlvar}.

In contrast, the forecasting strategy \eqref{eqn:MFEnKF-forecast-step} computes both the control variate $\uhb_i$ as well as the ancillary variate $\ub_i$ as solutions of the same ROM model \eqref{eqn:MFEnKF-forecast-step-model}. Consequently, they are both affected by the ROM bias. If $\beta_i$ is independent of the ROM state then the biases in control and ancillary variates cancel each other out in \eqref{eqn:z-controlvar};  if the bias is not constant this strategy is still likely to significantly reduce it.
\end{remark}

\subsection{Analysis step.}
%

We focus on the case where the control variate is $\uhb_i = \boldsymbol{\Phi}_r^\ast\,\xb_i$, the reduced observation operator is $\Hobs_{r,i}(\ub_i) = \Hobs_i(\*\Phi_r\,\ub_i)$, and the gain matrix is $\mathbf{S}_i = \*\Phi_r/2$, per \cref{rem:approx-optimal-gain}. In this case equation \eqref{eqn:hofz-approximate-T} is satisfied exactly by $\mathbf{T}_i = 1/2$, and the indirect observation operator \eqref{eqn:hofz-approximate} reads:
\begin{equation}
\label{eqn:hofz-controlvar-phi}
    \overline{\Hobs}_i(\zb_i) = \Hobs_i(\xb_i) - \frac{1}{2}\Hobs_{r,i}(\uhb_i) + \frac{1}{2}\Hobs_{r,i}(\ub_i).
\end{equation}
%
%
Using \cref{rem:assume-independence} we have that:
\begin{align*}
\MeanE{\overline{\Hobs}_i(\zb_i)} &= \MeanE{\Hobs(\xb_{i})} - \frac{1}{2}\,\MeanE{\Hobs_i(\*\Phi_r\,\uhb_i)} + \frac{1}{2}\,\MeanE{\Hobs_i(\*\Phi_r\ub_i)}, \\
\CovE{\overline{\Hobs}_i(\zb_i)}{\overline{\Hobs}_i(\zb_i)} &= \CovE{\Hobs_i(\xb_{i})}{\Hobs_i(\xb_{i})} 
+ \frac{1}{4}\CovE{\Hobs_i(\*\Phi_r\uhb_i)}{\Hobs_i(\*\Phi_r\uhb_i)} \\
& - \frac{1}{2}\CovE{\Hobs_i(\xb_{i})}{\Hobs_i(\*\Phi_r\uhb_i)}
- \frac{1}{2}\CovE{\Hobs_i(\*\Phi_r\uhb_i)}{\Hobs_i(\xb_{i})}
+ \frac{1}{4}\CovE{\Hobs_i(\*\Phi_r\ub_i)}{\Hobs_i(\*\Phi_r\ub_i)}.
\end{align*}
The covariance $\CovE{\zb_i}{\overline{\Hobs}_i(\zb_i)}$ is defined in a similar manner, and the empirical Kalman gain for the total variate is computed as follows:
\begin{equation}
\label{eqn:MFEnKF-Kalman-gain}
\widetilde{\K}_i \coloneqq  \CovE{\zb_i}{\overline{\Hobs}_i(\zb_i)}\, \Bigl(\CovE{\overline{\Hobs}_i(\zb_i)}{\overline{\Hobs}_i(\zb_i)} + \Cov{\erro^Z_i}{\erro^Z_i}\Bigr)^{-1}.
\end{equation}
The perturbed observations EnKF \eqref{eqn:enkfanalysis} is applied  using the indirect observations \cref{eqn:hofz-controlvar-phi} to estimate the total variate \cref{eqn:z-controlvar}: 
\begin{equation}
\label{eqn:kf-total-variate}
    \za_i = \zb_i - \widetilde{\mathbf{K}}_i\, \left(\overline{\Hobs}_i(\zb_i) - \Obs_i - \erro^{Z}_i\right),
\end{equation}
where $\erro^{Z}_i$ is an independent variable that represents the perturbations to be added to the indirect observations.
We make the ansatz:
\begin{equation}
\label{eqn:observation-noise}
    \erro^{Z}_i = \erro^{\x}_i - \frac{1}{2}\erro^{\uh}_i + \frac{1}{2}\erro^{\u}_i,
\end{equation}
such that we have:
\begin{equation*}
\begin{split}
    \overline{\Hobs}_i(\zb_i) - \erro^{Z}_i 
    &= \left(\Hobs_i(\xb_{i}) - \erro^{\x}_i\right) - \frac{1}{2}\left(\Hobs_i(\*\Phi_r \uhb_i) - \erro^{\uh}_i\right) + \frac{1}{2}\left(\Hobs_i(\*\Phi_r \ub_i) - \erro^\u_i\right).
    \end{split}
\end{equation*}
The MFEnKF analysis step transforms the three background ensembles \eqref{eqn:three-ensembles} into three posterior ensembles. Using the EnKF update \eqref{eqn:kf-total-variate},  the representation of the total variates \eqref{eqn:z-controlvar}, and the representation of the observation error \eqref{eqn:observation-noise}, we have:
\begin{equation*}
\begin{split}
& \underbrace{\xa_i - \frac{1}{2}\,\*\Phi_r\,(\uha_i - \ua_i)}_{\za_i} = \underbrace{\xb_i - \frac{1}{2}\,\*\Phi_r\,(\uhb_i - \ub_i)}_{\zb_i}  \\
& 
- \widetilde{\mathbf{K}}_i\,\Bigl(\underbrace{(\Hobs_i(\xb_i)  - \Obs_i + \erro^{\x}_i) - \frac{1}{2}\,(\Hobs_{r,i}(\uhb_i)  - \Obs_i - \erro^{\uh}_i) + \frac{1}{2}\,(\Hobs_{r,i}(\ub_i) - \Obs_i - \erro^\u_i)}_{\overline{\Hobs}_i(\zb_i) - \Obs_i - \erro_i} \Bigr).
\end{split}
\end{equation*}
Under the assumption that the all the information of $\zb$ in the orthogonal complement control space $\widehat{\!S}_{\u}^\perp$ does not effect the analysis control and ancillary variates, the MFEnKF transforms each of the variables \eqref{eqn:three-ensembles} as follows:
\begin{equation*}
\begin{split}
\xa_i &=\xb_i - \widetilde{\mathbf{K}}_i\,(\Hobs_i(\xb_i)  - \Obs_i + \erro^{\x}_i), \\
\uha_i &=\uhb_i - \*\Phi_r^*\,\widetilde{\mathbf{K}}_i\,(\Hobs_{r,i}(\uhb_i)  - \Obs_i + \erro^{\uh}_i),\\
\ua_i&= \ub_i - \*\Phi_r^*\,\widetilde{\mathbf{K}}_i\,(\Hobs_{r,i}(\ub_i) - \Obs_i + \erro^\u_i).
\end{split}
\end{equation*}
The background and analysis means of the total variate \cref{eqn:z-controlvar} are, respectively:
\begin{equation}
\label{eqn:Z-analysismean}
   \MeanE{\zb_i} = \MeanE{\xb_i} - \frac{1}{2}\*\Phi_r\,(\MeanE{\uhb_i} - \MeanE{\ub_i}),\quad
    \MeanE{\za_i} = \MeanE{\zb_i} - \widetilde{\mathbf{K}}_i\,(\MeanE{\overline{\Hobs}_i(\zb_i)} - \Obs_i).
\end{equation}
The MFEnKF analysis step \eqref{eqn:MFEnKF-forecast-step} is illustrated in Figure \ref{fig:MFEnKF}.

We consider two interpretations of the error in the indirect observations, which lead to different distributions of observation perturbations. 
Approach (i), called `total variate uncertainty consistency', interprets inference as occurring only on $\zb_i$, $\Hobs_i(\zb_i)$, and $Y$, with all other variates being a means to an end.
Approach (ii), called `control space uncertainty consistency', interprets the total variate as a means to an end, and focuses on the inference on primary and ancillary variates.

We first discuss approach (i), the `total variate uncertainty consistency'.
We require that $\overline{\Hobs}_i(\cdot) \approx \Hobs_i(\cdot)$ \eqref{eqn:hofz-approximate-H}, and that both operators have the same distribution of the observation errors, 
\begin{equation}
\label{eqn:same-obs-errors}
\Cov{\erro^Z_i}{\erro^Z_i} = \Cov{\erro^X_i}{\erro^X_i} = \Cov{\erro_i}{\erro_i}.
\end{equation}
To maintain the independence of the ancillary variate of both the principal and control variates, we make the natural assumption that $\erro^{\u}_i$ is independent of $\erro^{\x}_i$ and $\erro^{\uh}_i$. Consequently:
\begin{equation}
\label{eqn:etafull}
    \Cov{\erro^Z_i}{\erro^Z_i} = 
    \Cov{\erro^{\x}_i}{\erro^{\x}_i} + \frac{1}{4}\Cov{\erro^{\uh}_i}{\erro^{\uh}_i} +  \frac{1}{4}\Cov{\erro^{\u}_i}{\erro^{\u}_i} -  \frac{1}{2}\Cov{\erro^{\uh}_i}{\erro^{\x}_i} -  \frac{1}{2}\Cov{\erro^{\x}_i}{\erro^{\uh}_i}.
\end{equation}
%
%
%
To support the projection assumption \cref{eqn:MFEnKF-forecast-step-projection} we select $\eta^{\hat{U}} = \eta^X$. From \eqref{eqn:etafull} and \cref{eqn:same-obs-errors} we infer that $\Cov{\erro^{\u}_i}{\erro^{\u}_i} = 3\Cov{\erro_i}{\erro_i}$, and therefore
\begin{equation}
    \erro^{\x}_i = \erro^{\uh}_i \sim \!N(\*0, \Cov{\erro_i}{\erro_i}),\quad \erro^{\u}_i \sim \!N(\*0, 3\Cov{\erro_i}{\erro_i}).
\end{equation}

Note that replacing the analysis control variate with the projection of the analysis principal variate \cref{eqn:MFEnKF-forecast-step-projection} leads to a second possible definition of the analysis total variate: 
\begin{equation*}
\begin{split}
    \za_i = \xa_i - \frac{1}{2}\*\Phi_r(\uha_i - \ua_i),\quad
    \widetilde{Z}^\|a_i = \xa_i - \frac{1}{2}\*\Phi_r(\*\Phi^*_r \xa_i - \ua_i).
    \end{split}
\end{equation*}
The choice of observation  perturbations $\erro^{\x}$, $\erro^{\uh}_i$, and $\erro^{\u}$ in method (i) ensures the `total variate uncertainty consistency':
\begin{equation}
\label{eqn:total-variate-consistency}
    \Cov{\za_i}{\za_i} = \Cov{\widetilde{Z}^\|a_i}{\widetilde{Z}^\|a_i}.
\end{equation}
However in this view the inference on the ancillary variate has no direct physical meaning, and the assumed ancillary observation error is differs from the one used to construct the Kalman gain.

We now discuss approach (ii).  In this view the total variate is a means to an end. One runs multiple EnKFs for $\xa_i$, $\uha_i$, and $\ua_i$. Observations are taken in the full-order space, and in the reduced order space. The observations of control and ancillary variates use the same operator $\Hobs_{r,i}(\cdot)$, and therefore the errors have the same covariance $\Cov{\erro^U}{\erro^U}$. The observation errors are assumed to be:
\begin{equation*}
\eta^{\x}_i \sim\!N(\*0, \Cov{\erro_i}{\erro_i}), \quad  \eta^\u_i \sim\!N(\*0, \Cov{\erro^U_i}{\erro^U_i}),
\quad \eta^{\uh}_i = \Cov{\erro^U_i}{\erro^U_i}^{1/2}\,\Cov{\erro_i}{\erro_i}^{-1/2}\,\eta^{\x}_i \sim\!N(\*0, \Cov{\erro^U_i}{\erro^U_i}),
\end{equation*}
such that $\eta^{\uh}_i $ and $\eta^{\x}_i$ are highly correlated, but $\eta^\u_i$ and $\eta^{\x}_i$ are independent. From \cref{eqn:etafull} the covariance of the total variate observation error is:
\begin{equation*}
\begin{split}
\Cov{\erro^Z_i}{\erro^Z_i} 
&= \Cov{\erro_i}{\erro_i} + \frac{1}{2}\,\Cov{\erro^U_i}{\erro^U_i}  - \frac{1}{2}\,\Cov{\erro^U_i}{\erro^U_i}^{1/2}\,\Cov{\erro_i}{\erro_i}^{1/2} - \frac{1}{2}\,\Cov{\erro_i}{\erro_i}^{1/2}\,\Cov{\erro^U_i}{\erro^U_i}^{1/2}.
\end{split}
\end{equation*}
If the errors of the reduced space observations are specified, then the above formula can be used to construct the empirical Kalman gain. For simplicity we consider in this paper that $\Cov{\erro^U_i}{\erro^U_i} = s_i^2\,\Cov{\erro_i}{\erro_i}$  such that $\eta^{\uh}_i = s_i\,\eta^{\x}_i$  and $\Cov{\erro^Z_i}{\erro^Z_i} = (1-s_i+s_i^2/2)\,\Cov{\erro_i}{\erro_i}$. 
If $s_i = 1$, then then we have the nice property that
\begin{equation}
    \Cov{\erro_i}{\erro_i} = \Cov{\erro^{\x}_i}{\erro^{\x}_i} = \Cov{\erro^{\uh}_i}{\erro^{\uh}_i} = \Cov{\erro^{\u}_i}{\erro^{\u}_i},
\end{equation}
and the projection assumption \cref{eqn:MFEnKF-forecast-step-projection} is supported. This choice leads to $\Cov{\erro^{Z}_i}{\erro^{Z}_i} = (1/2)\Cov{\erro_i}{\erro_i}$, and requires updating the observation error covariance in the Kalman gain calculation \eqref{eqn:MFEnKF-Kalman-gain} accordingly. Since the same Kalman gain is used for all variables, this change is not optimal for the analysis of principal, control, and ancillary variates.
If $s_i = 2$ then $\Cov{\erro^{Z}_i}{\erro^{Z}_i} = \Cov{\erro_i}{\erro_i}$, and the inference performed on the total and principal variates has the correct observation error. Moreover, the control and ancillary variates share the same observation error covariance. However, the projection assumption \cref{eqn:MFEnKF-forecast-step-projection} is unsupported.

A third approach would be to slightly relax the projection assumption \cref{eqn:MFEnKF-forecast-step-projection}, by allowing perturbations to it, and by additionally weakening the correlation structure of $\erro^{\x}_i$ and $\erro^{\uh}_i$. Such methods are outside the scope of this paper.

In the remainder of the paper we  primarily focus on method (i) where the Kalman gain is consistent for the total variate, and that the projection assumption is satisfied. Analysis of the ancillary variate uses a suboptimal noise level in the Kalman gain.

\begin{remark}
The control and ancillary variates have to have the same mean, and that the control variate needs to remain strongly correlated with the principal variate. In order to satisfy the first condition, we perform a re-centering procedure around the mean of the combined analysis \cref{eqn:Z-analysismean}:
\begin{equation}
\label{eqn:MFEnKF-mean-correction}
    \MeanE{\xa_i} \xleftarrow{} \MeanE{\za_i},\qquad 
    \MeanE{\uha_i} \xleftarrow{} \*\Phi_r^*\, \MeanE{\za_i},\qquad 
    \MeanE{\ua_i} \xleftarrow{} \*\Phi_r^*\,\MeanE{\za_i}.
\end{equation}%
The approach \eqref{eqn:MFEnKF-mean-correction} is not the only way to ensure that $U$ and $\uh$ have a common mean, however it is a natural choice. An alternative approach is to not correct the mean of the ancillary ensemble at all, but re-center the control ensemble: $\MeanE{\uha_i} \xleftarrow{} \MeanE{\ua_i}$.
In this second approach one runs two Kalman filters side by side, one for $X$ and one for $U$.
%
\end{remark}
The MLEnKF anomaly updates are as follows:
\begin{equation}
\begin{aligned}
    \An{\xa_{i}} &= \An{\xb_{i}} - \widetilde{\mathbf{K}}_i\,\left(\Z{\xb_i} - \An{\erro^{\x}_i}\right),\\
    \An{\uha_i} &= \*\Phi_r^*\,\An{\xa_{i}}, \\ 
    \An{\ua_i}  &= \An{\ub_i} - \*\Phi_r^*\,\widetilde{\mathbf{K}}_i\,\left(\Z{\pub_i} - \An{\erro^{\u}_i}\right).
\end{aligned}
\end{equation}
Note that the anomaly updates for $\An{\ua_i}$ are done solely in reduced (and observation) space. The only significant additional cost in the analysis step is the calculation of the statistical Kalman gain \eqref{eqn:MFEnKF-Kalman-gain}.

\subsection{Statistical Analysis of the MFEnKF.}
For analysis we consider the case where all uncertainties are Gaussian, and the observation operators are linear, $\Hobs = \HH$, $\Hobs_r = \HH\,\*\Phi_r$. We assume that $\Mean{\xb} = X^\|t$ and $\Mean{\ub} = \Mean{\uh^\|b}$. We consider the transformed total variate and principal variate parametrized by the Kalman gain,
\begin{gather}
    \za(\mathbf{K}) = \zb - \mathbf{K}\left(\overline{\Hobs}(\zb) - Y\right),\quad \xa(\mathbf{K}) = \xb - \mathbf{K}\left(\Hobs(\xb) - Y\right),
\end{gather}
and denote by $\mathbf{K}_Z$,  $\mathbf{K}_X$ be the optimal gains given by Kalman's formula that minimize the covariances of $\za(\mathbf{K}_Z)$ and $\xa(\mathbf{K}_X)$, respectively.

\begin{theorem}
Under the assumption that the statistical Kalman gain is independent of all other variates, the MFEnKF analysis is an unbiased estimator:
\[
\Mean{\za(\mathbf{K}_Z)} = \Mean{\xa(\mathbf{K}_Z)} = \Mean{\xa(\mathbf{K}_X)} = X^\|t.
\]
\end{theorem}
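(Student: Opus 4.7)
The plan is to take expectations of the three estimators, use the independence of the statistical Kalman gain to factor it out, and then show that in each case the expected innovation vanishes. The observation perturbation $\erro^Z_i$ (and its constituents $\erro^{\x}_i$, $\erro^{\uh}_i$, $\erro^{\u}_i$) is zero-mean by construction, and the true observation satisfies $\Mean{Y}=\HH\,X^{\|t}$, so the calculation reduces to checking that each prior predicted observation has mean $\HH\,X^{\|t}$.

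First I would handle the two expressions involving the principal variate, namely $\xa(\mathbf{K}_X)$ and $\xa(\mathbf{K}_Z)$, which require an identical argument. Taking expectations and using independence of $\mathbf{K}$ from the remaining variates yields
\begin{equation*}
\Mean{\xa(\mathbf{K})} \;=\; \Mean{\xb} \;-\; \Mean{\mathbf{K}}\,\bigl(\HH\,\Mean{\xb} - \Mean{Y}\bigr).
\end{equation*}
Since by assumption $\Mean{\xb}=X^{\|t}$ and since the observation noise is zero-mean giving $\Mean{Y}=\HH\,X^{\|t}$, the bracketed factor vanishes and we get $\Mean{\xa(\mathbf{K})}=X^{\|t}$ for any admissible gain, in particular for both $\mathbf{K}_X$ and $\mathbf{K}_Z$.

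Next I would treat $\za(\mathbf{K}_Z)$. Using the control-variate form $\zb=\xb-\tfrac{1}{2}\*\Phi_r(\uhb-\ub)$ and the assumption $\Mean{\uhb}=\Mean{\ub}$, linearity of expectation immediately gives $\Mean{\zb}=\Mean{\xb}=X^{\|t}$. For the indirect observation operator \eqref{eqn:hofz-controlvar-phi} with $\Hobs_r=\HH\,\*\Phi_r$, the same cancellation between the control and ancillary means yields
\begin{equation*}
\Mean{\overline{\Hobs}(\zb)} \;=\; \HH\,\Mean{\xb} - \tfrac{1}{2}\HH\,\*\Phi_r\bigl(\Mean{\uhb}-\Mean{\ub}\bigr) \;=\; \HH\,X^{\|t}.
\end{equation*}
Factoring out $\Mean{\mathbf{K}_Z}$ by independence and using $\Mean{\erro^Z_i}=0$ together with $\Mean{Y}=\HH\,X^{\|t}$, the innovation again has zero mean and $\Mean{\za(\mathbf{K}_Z)}=X^{\|t}$.

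The only subtle point, and the main thing to state carefully, is the independence assumption: in a genuine empirical ensemble the sampled Kalman gain $\widetilde{\mathbf{K}}$ is built from the same ensemble to which it is applied, so factoring $\Mean{\widetilde{\mathbf{K}}\cdot(\cdot)}=\Mean{\widetilde{\mathbf{K}}}\,\Mean{\cdot}$ is only justified under the stated hypothesis. I would flag this explicitly, as in the standard EnKF analysis, and then the three equalities follow in one or two lines each without further calculation.
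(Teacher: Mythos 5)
Your proposal is correct and is exactly the argument the paper intends: its proof is the one-liner ``apply a Kalman formula and take means,'' and you have simply written out that computation, using the independence of the gain to factor the expectation, $\Mean{\xb}=X^{\|t}$, $\Mean{\uh^{\|b}}=\Mean{\ub}$, and the zero-mean observation perturbations to make each expected innovation vanish. Your explicit flagging of the gain-independence hypothesis is a fair and useful remark, but it does not change the route.
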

\begin{proof}
The result follows from applying a Kalman formula and taking means.
\end{proof}
The following theorem shows that performing the exact analysis in the total variate leads to better estimates than performing the analysis in the principal variate.
\begin{theorem}
\label{eqn:MSKF-covariance}
The analysis total variate is $\za(\mathbf{K}_Z)$, and the principal component of the analysis total variate is $\xa(\mathbf{K}_Z)$. Application of the Kalman filter to the principal variate leads to the analysis $\xa(\mathbf{K}_X)$. It holds that:
\begin{equation}
\label{eqn:covariance-hierarchy}
    \Cov{\za(\mathbf{K}_Z)}{\za(\mathbf{K}_Z)} \leq \Cov{\xa(\mathbf{K}_X)}{\xa(\mathbf{K}_X)} \leq \Cov{\xa(\mathbf{K}_Z)}{\xa(\mathbf{K}_Z)},
\end{equation}
where inequalities are interpreted in the symmetric positive definite matrix sense.
\end{theorem}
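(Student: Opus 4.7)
The plan is to reduce the statement to two familiar facts about the linear Kalman update: (a) optimality of the Kalman gain in the symmetric positive definite (SPD) sense, and (b) monotonicity of the posterior covariance in the prior covariance. First, in the linear Gaussian setting assumed for the analysis, the indirect observation operator collapses, since
\[
\overline{\Hobs}(\zb) = \HH\xb - \tfrac{1}{2}\HH\*\Phi_r(\uhb - \ub) = \HH\zb,
\]
so that $\za(\mathbf{K}) = \zb - \mathbf{K}(\HH\zb - Y)$ is literally a Kalman filter applied to the total variate with observation operator $\HH$. Under approach~(i) the observation noise covariance for $\za$ coincides with $\Cov{\erro}{\erro}$, so both filters use the same observation noise.

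The right inequality would then be immediate from the defining property of $\mathbf{K}_X$. The optimal Kalman gain minimizes the posterior covariance not merely in trace but in the SPD sense, since for any gain $\mathbf{K}$ one has
\[
\Cov{\xa(\mathbf{K})}{\xa(\mathbf{K})} - \Cov{\xa(\mathbf{K}_X)}{\xa(\mathbf{K}_X)} = (\mathbf{K}-\mathbf{K}_X)\bigl(\HH\Cov{\xb}{\xb}\HH^\intercal + \Cov{\erro}{\erro}\bigr)(\mathbf{K}-\mathbf{K}_X)^\intercal \geq 0.
\]
Plugging in $\mathbf{K}=\mathbf{K}_Z$ gives the right half of \eqref{eqn:covariance-hierarchy}.

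For the left inequality I would pass to the information form of the Kalman update, writing
\[
\Cov{\za(\mathbf{K}_Z)}{\za(\mathbf{K}_Z)}^{-1} = \Cov{\zb}{\zb}^{-1} + \HH^\intercal\Cov{\erro}{\erro}^{-1}\HH, \qquad \Cov{\xa(\mathbf{K}_X)}{\xa(\mathbf{K}_X)}^{-1} = \Cov{\xb}{\xb}^{-1} + \HH^\intercal\Cov{\erro}{\erro}^{-1}\HH,
\]
which is a standard Woodbury rewriting. The key input is then the comparison $\Cov{\zb}{\zb} \leq \Cov{\xb}{\xb}$ supplied by Remark~\ref{rem:positivity-of-covariance}; this inverts to $\Cov{\zb}{\zb}^{-1} \geq \Cov{\xb}{\xb}^{-1}$, is preserved by adding the common term $\HH^\intercal\Cov{\erro}{\erro}^{-1}\HH$, and inverts again to yield the desired bound on the analysis covariances.

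The main technical nuisance I anticipate is the regularity of $\Cov{\zb}{\zb}$: if the projection control variate captures certain directions of $\xb$ exactly, the prior can be rank-deficient and the information-form identity requires care. I would sidestep this with the standard device of replacing $\Cov{\zb}{\zb}$ by $\Cov{\zb}{\zb} + \varepsilon\mathbf{I}$, running the above argument, and letting $\varepsilon \downarrow 0$, noting that both sides of \eqref{eqn:covariance-hierarchy} depend continuously on $\varepsilon$. A second point to verify, depending on the precise statement the authors intend, is that $\Cov{\zb}{\zb} \leq \Cov{\xb}{\xb}$ holds for the specific $\mathbf{S} = \*\Phi_r/2$ used in MFEnKF and not only for the optimal gain of Lemma~\ref{lem:linearcv}; this would follow either from a direct expansion of \eqref{eqn:covariance-total-variate} using the projection structure and the ancillary-control covariance matching, or by invoking the near-optimality argument underpinning Theorem~\ref{thm:optimalavggain}.
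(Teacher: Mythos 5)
Your proposal is correct, and the right-hand inequality is handled exactly as in the paper (completion of squares around the optimal gain $\mathbf{K}_X$, evaluated at $\mathbf{K}=\mathbf{K}_Z$). For the left-hand inequality you take a genuinely different route. The paper first invokes optimality of $\mathbf{K}_Z$ to get $\Cov{\za(\mathbf{K}_Z)}{\za(\mathbf{K}_Z)} \le \Cov{\za(\mathbf{K}_X)}{\za(\mathbf{K}_X)}$, then expands $\Cov{\za(\mathbf{K}_X)}{\za(\mathbf{K}_X)}$ in Joseph form with the \emph{fixed suboptimal} gain $\mathbf{K}_X$ and substitutes $\Cov{\zb}{\zb} = \Cov{\xb}{\xb} - \Cov{\xb}{\uhb}\bigl(\Cov{\uhb}{\uhb}+\Cov{\ub}{\ub}\bigr)^{-1}\Cov{\uhb}{\xb}$, obtaining $\Cov{\xa(\mathbf{K}_X)}{\xa(\mathbf{K}_X)}$ minus an explicit symmetric positive semidefinite correction $\left(\*I-\mathbf{K}_X\HH\right)\Cov{\xb}{\uhb}\bigl(\cdot\bigr)^{-1}\Cov{\uhb}{\xb}\left(\*I-\mathbf{K}_X\HH\right)^\intercal$. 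You instead pass to the information form and use operator monotonicity of the inverse applied to $\Cov{\zb}{\zb}\le\Cov{\xb}{\xb}$. Both are valid; the paper's Joseph-form route avoids any invertibility issue (so no $\varepsilon$-regularization is needed) and yields a closed-form expression for the gap, whereas your information-form argument is shorter modulo the rank-deficiency caveat you correctly flag and correctly patch. Your closing observation — that $\Cov{\zb}{\zb}\le\Cov{\xb}{\xb}$ is guaranteed by \eqref{eqn:covariance-total-variate} only for the \emph{optimal} control-variate gain, not automatically for the practical choice $\*S=\*\Phi_r/2$ — is a real subtlety that the paper's proof also glosses over, since it too quotes \eqref{eqn:covariance-total-variate}; the theorem is implicitly stated for the optimal $\*S$.
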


\begin{proof}
From the optimality of the Kalman filter we have that:
\[
\Cov{\za(\mathbf{K}_Z)}{\za(\mathbf{K}_Z)} \le \Cov{\za(\mathbf{K_X})}{\za(\mathbf{K_X})}, \qquad
\Cov{\xa(\mathbf{K}_X)}{\xa(\mathbf{K}_X)} \le \Cov{\xa(\mathbf{K_Z})}{\xa(\mathbf{K_Z})}, 
\]
which proves the second inequality in \eqref{eqn:covariance-hierarchy}.
From \eqref{eqn:covariance-total-variate} we have that:
\begin{equation}
\begin{split}
    \Cov{\zb}{\zb} 
    &= \Cov{\xb}{\xb} - \Cov{\xb}{\uh^\|b}{\bigl(\Cov{\uh^\|b}{\uh^\|b} + \Cov{\ub}{\ub}\bigr)}^{-1}\Cov{\uh^\|b}{\xb}
    \le \Cov{\xb}{\xb}.
\end{split}
\end{equation}

Next, we use the above equations and the Kalman analysis covariance formula to prove the first inequality in \eqref{eqn:covariance-hierarchy}:
\begin{equation}
\label{eqn:cov-za-exact}
\begin{split}
 &   \Cov{\za(\mathbf{K}_Z)}{\za(\mathbf{K}_Z)} \le \Cov{\za(\mathbf{K}_X)}{\za(\mathbf{K}_X)} \\
    &= \left(\*I - \mathbf{K}_X\*H\right)\Cov{\zb}{\zb}\left(\*I - \mathbf{K}_X\*H\right)^\intercal + \mathbf{K}_X\,
    \Cov{\erro}{\erro}\,\mathbf{K}_X^\intercal \\
    &= \Cov{\xa(\mathbf{K}_X)}{\xa(\mathbf{K}_X)}\\
   &\quad  -  \left(\*I - \mathbf{K}_X\*H\right)\,\Cov{\xb}{\uh^\|b}{\left(\Cov{\uh^\|b}{\uh^\|b} + \Cov{\ub}{\ub}\right)}^{-1}\Cov{\uh^\|b}{\xb}\,\left(\*I - \mathbf{K}_X\*H\right)^\intercal.
\end{split}
\end{equation}
\end{proof}

We next turn our attention to sampling errors.

\begin{theorem}
\label{thm:mean-cov-mfenkf}
 Assume that EnKF produces $N_X$ i.i.d. samples of $\xa(\mathbf{K}_X)$. The covariance of the sample mean estimate about the true state is
\begin{subequations}
\begin{equation}
\label{eqn:covariance-enkf-estimate}
\Cov{\MeanE{\xa(\mathbf{K}_X)}}{\MeanE{\xa(\mathbf{K}_X)}} = N_X^{-1}\, \Cov{\xa(\mathbf{K}_X)}{\xa(\mathbf{K}_X)}.
\end{equation}
\end{subequations}
Assume that MFEnKF produces $N_X$ i.i.d. samples of $\xa(\mathbf{K}_Z)$ and $\uh^\|a$, and $N_U$ i.i.d. samples of $\ua$. Since
\begin{equation}
\begin{split}
    \MeanE{\za(\mathbf{K}_Z)} &= 
    (\*I - \*K_Z\*H)\left[\MeanE{\xb} - \*S\,\left(\MeanE{\uhb} - \MeanE{\ub}\right)\right]  + \*K_Z\left(Y + \erro\right)\\
    &\approx\MeanE{\xa(\mathbf{K}_Z)} - \*S\,\MeanE{\uh^\|a} + \*S\,\MeanE{\ua},
    \end{split}
\end{equation}%
and we estimate the moments of $\xa$, $\uh^\|a$ using $N_X$ samples, and the moments of $\ua$ using $N_U$ samples, then the sample mean of the analysis total variate has less variance than the Kalman filter applied to the principal variate,
\begin{equation}
    \Cov{\MeanE{\za(\mathbf{K}_Z)}}{\MeanE{\za(\mathbf{K}_Z)}} \leq \Cov{\MeanE{\xa(\mathbf{K}_X)}}{\MeanE{\xa(\mathbf{K}_X)}}.
\end{equation}
\end{theorem}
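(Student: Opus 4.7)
The plan is to handle the two claims separately. The first identity is essentially the textbook statement that the covariance of a sample mean of $N_X$ i.i.d.\ samples equals $N_X^{-1}$ times the single-sample covariance, applied to $\xa(\mathbf{K}_X)$. I would state this in one line and move on.

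For the main inequality, the key observation is that in the approximate decomposition
\[
\MeanE{\za(\mathbf{K}_Z)} \approx \MeanE{\xa(\mathbf{K}_Z)} - \mathbf{S}\,\MeanE{\uh^{\|a}} + \mathbf{S}\,\MeanE{\ua},
\]
the samples of the pair $(\xa(\mathbf{K}_Z),\,\uh^{\|a})$ are drawn jointly ($N_X$ of them, mutually correlated by construction), whereas the samples of $\ua$ are drawn independently from that pair ($N_U$ of them). So I would first rewrite the estimator as
\[
\MeanE{\za(\mathbf{K}_Z)} \approx \frac{1}{N_X}\sum_{k=1}^{N_X}\bigl(\xa^{[k]}(\mathbf{K}_Z) - \mathbf{S}\,\uh^{\|a,[k]}\bigr) + \mathbf{S}\,\frac{1}{N_U}\sum_{j=1}^{N_U}\ua^{[j]},
\]
and use the independence of the ancillary batch from the principal/control batch to compute
\[
\Cov{\MeanE{\za(\mathbf{K}_Z)}}{\MeanE{\za(\mathbf{K}_Z)}} = \tfrac{1}{N_X}\Cov{\xa(\mathbf{K}_Z) - \mathbf{S}\,\uh^{\|a}}{\xa(\mathbf{K}_Z) - \mathbf{S}\,\uh^{\|a}} + \tfrac{1}{N_U}\,\mathbf{S}\,\Cov{\ua}{\ua}\,\mathbf{S}^\intercal.
\]

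Next I would connect this to the total variate covariance. Since $\za(\mathbf{K}_Z) = \xa(\mathbf{K}_Z) - \mathbf{S}(\uh^{\|a} - \ua)$ and $\ua$ is independent of $(\xa(\mathbf{K}_Z),\uh^{\|a})$, expanding gives
\[
\Cov{\xa(\mathbf{K}_Z) - \mathbf{S}\,\uh^{\|a}}{\xa(\mathbf{K}_Z) - \mathbf{S}\,\uh^{\|a}} = \Cov{\za(\mathbf{K}_Z)}{\za(\mathbf{K}_Z)} - \mathbf{S}\,\Cov{\ua}{\ua}\,\mathbf{S}^\intercal.
\]
Substituting into the previous display yields
\[
\Cov{\MeanE{\za(\mathbf{K}_Z)}}{\MeanE{\za(\mathbf{K}_Z)}} = \tfrac{1}{N_X}\,\Cov{\za(\mathbf{K}_Z)}{\za(\mathbf{K}_Z)} + \bigl(\tfrac{1}{N_U} - \tfrac{1}{N_X}\bigr)\,\mathbf{S}\,\Cov{\ua}{\ua}\,\mathbf{S}^\intercal,
\]
which is the telescoping-variance identity familiar from multifidelity Monte Carlo.

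To close the argument I would invoke the standing assumption of the paper that the ancillary ensemble is the larger one, $N_U \geq N_X$ (this is the whole point of MFEnKF: many cheap ROM runs). Then the second term on the right is negative semidefinite, so
\[
\Cov{\MeanE{\za(\mathbf{K}_Z)}}{\MeanE{\za(\mathbf{K}_Z)}} \le \tfrac{1}{N_X}\,\Cov{\za(\mathbf{K}_Z)}{\za(\mathbf{K}_Z)},
\]
and Theorem~\ref{eqn:MSKF-covariance} gives $\Cov{\za(\mathbf{K}_Z)}{\za(\mathbf{K}_Z)} \le \Cov{\xa(\mathbf{K}_X)}{\xa(\mathbf{K}_X)}$, which combined with the first identity concludes the proof.

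The one step I expect to require care is justifying that the approximation in the statement of the theorem does not spoil the s.p.d.\ inequality (the theorem uses $\approx$, not $=$). I would either state this as a working equality under the linear-Gaussian/independent-Kalman-gain hypothesis already invoked in the previous theorem, or explicitly verify that the right-hand side of the $\MeanE{\za(\mathbf{K}_Z)}$ expansion collapses to the stated form under linearity of $\Hobs$ and the independence of $\mathbf{K}_Z$ from the variates. The rest is routine covariance algebra that leverages only independence of ancillary samples and the earlier established bound.
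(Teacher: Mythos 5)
Your argument is essentially the paper's own: both hinge on the telescoping-variance identity
$\Cov{\MeanE{\za(\mathbf{K}_Z)}}{\MeanE{\za(\mathbf{K}_Z)}} = N_X^{-1}\,\Cov{\za(\mathbf{K}_Z)}{\za(\mathbf{K}_Z)} + \left(N_U^{-1}-N_X^{-1}\right)\cdot(\text{ancillary contribution})$, obtained from the independence of the $N_U$-sample ancillary batch from the jointly drawn principal/control batch. The only divergence is the closing step --- you drop the second term using $N_U\ge N_X$ and then invoke $\Cov{\za(\mathbf{K}_Z)}{\za(\mathbf{K}_Z)}\le\Cov{\xa(\mathbf{K}_X)}{\xa(\mathbf{K}_X)}$ from the preceding theorem, whereas the paper carries that term together with the explicit deficit in \eqref{eqn:cov-za-exact} into a single subtracted s.p.d.\ matrix, which quantifies the improvement and holds under a weaker ensemble-size condition; since $N_U\ge N_X$ is the operating regime of MFEnKF, this difference is immaterial.
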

\begin{proof}
A direct calculation shows that the variance of the empirical mean estimate about the truth is:  
\begin{equation}
\label{eqn:covariance-MFEnKF-estimate}
\begin{split}
& \Cov{\MeanE{\za(\mathbf{K}_Z)}}{\MeanE{\za(\mathbf{K}_Z)}} 
    = N_X^{-1}\,\Cov{\za(\mathbf{K}_Z)}{\za(\mathbf{K}_Z)}\\
    &\quad
    + \left(N_U^{-1} - N_X^{-1}\right)(\*I - \*K_Z\*H)\,\*S\,\Cov{\ub}{\ub}\,\*S^\intercal{(\*I - \*K_Z\*H)}^\intercal \\
& \le \Cov{\MeanE{\xa(\mathbf{K}_X)}}{\MeanE{\xa(\mathbf{K}_X)}} + \left(N_U^{-1} - N_X^{-1}\right)(\*I - \*K_X\*H)\,\*S\,\Cov{\ub}{\ub}\,\*S^\intercal{(\*I - \*K_X\*H)}^\intercal \\
    &\quad  - N_X^{-1}\, \left(\*I - \mathbf{K}_X\*H\right)\,\*S\,{\left(\Cov{\uh^\|b}{\uh^\|b} + \Cov{\ub}{\ub}\right)}\,\*S^\intercal\left(\*I - \mathbf{K}_X\*H\right)^\intercal,\\
    &= \Cov{\MeanE{\xa(\mathbf{K}_X)}}{\MeanE{\xa(\mathbf{K}_X)}}\\
    &\quad - \left(\*I - \mathbf{K}_X\*H\right)\,\*S\,{\left(N_X^{-1}\Cov{\uh^\|b}{\uh^\|b} + \left(2 N_X^{-1} - N_U^{-1}\right)\Cov{\ub}{\ub}\right)}\,\*S^\intercal\left(\*I - \mathbf{K}_X\*H\right)^\intercal,
\end{split}
\end{equation}
where for the inequality we used \eqref{eqn:cov-za-exact} and \eqref{eqn:covariance-enkf-estimate}.
\end{proof}

Theorem \eqref{thm:mean-cov-mfenkf} shows that MFEnKF provides an estimate that is always at least as good as the corresponding EnKF estimate for the same number $N_X$ of high fidelity model runs. The difference comes from the smaller variance of $\za(\mathbf{K}_Z)$ compared to $\xa(\mathbf{K}_X)$ (first term in \cref{eqn:covariance-MFEnKF-estimate}), from the use of control variates in covariance estimates  and from using the data to assimilate the reduced space variables (second term in \cref{eqn:covariance-MFEnKF-estimate}).

\begin{remark}
EnKF produces an ensemble that quantifies the posterior uncertainty in the FOM state. From \eqref{eqn:covariance-hierarchy}, the posterior ensemble of principal variables $\{\xa[e](K_Z)\}_{e=1,\dots,N_X}$ constructed by MFEnKF provides (only) an upper bound for the analysis state error covariance. For posterior uncertainty quantification one can use $N_X$ members of the $\ua[e]$ ensemble to construct an ensemble of total variates.
\end{remark}

\subsection{Cost Analysis of the MFEnKF.}

We seek to find an equivalent EnKF running an ensemble size of $M_X$ full order models that gives the same analysis sampling error as MFEnKF with $N_X$ full order and $N_U$ reduced order ensemble sizes. We measure sampling errors by the trace generalized variance $\sigma_W = \tr(\Cov{W}{W})$.

By \cref{eqn:covariance-enkf-estimate} the sampling error for EnKF  is $M_X^{-1} \sigma_X$, and 
by \cref{eqn:covariance-MFEnKF-estimate} sampling error for  MFEnKF is 
$N_{\x}^{-1}\sigma_Z + \left(N_U^{-1} - N_{\x}^{-1}\right)\sigma_{\*S\, U}$. By matching these generalized variances
the effective ensemble size of the EnKF is :
\begin{equation}
    M_X = \frac{ N_X N_U \sigma_X}{N_U \sigma_Z - \sigma_{\*S U} (N_U - N_X)}.
\end{equation}
We see by direct calculation that $M_X \geq N_X$ whenever $N_U \geq N_X$ and $\sigma_Z \geq \sigma_{\*S\, U} (N_U - N_X)/N_U$.

Let $C_X$ be the cost of running a full order model, and $C_U$ the cost of running a lower fidelity model is $C_U$. To obtain similar analyses, the cost of running the EnKF is $C_X\, M_X$, and the cost of running the MFEnKF is $C_X N_X + C_U(N_X + N_U)$. Consequently,  the MFEnKF algorithm is more efficient than EnKF whenever the cost of running the lower fidelity model satisfies:
%
\begin{equation}
    C_U \leq \frac{C_X (M_X - N_X)}{N_X + N_U}.
\end{equation}

\subsection{Telescopic extension.}

We now discuss the telescopic extension from the two-fidelity to the multifidelity ensemble Kalman filter, by utilizing the multivariate control variate extensions discussed in \cref{sec:multifidelity}. Consider a sequence of projection operators, $\*\Phi_{r_\ell}^{\ell}$ and $\*\Phi_{r_\ell}^{\ell,*}$ for $\ell = 1,\dots,\!L$, and denote $\overline{\*\Phi}_{r_\ell}^{\ell} = \prod_{\lambda=1}^{\ell} \*\Phi_{r_\lambda, \lambda}$. The control variate relation between fidelity $\ell-1$ and $\ell$ is $\uh_{\ell,i}=\*\Phi_{r_\ell}^{\ell,*}\,\u_{\ell-1,i}$, with $\u_{0,i}\ \equiv \x_i$. The corresponding gain from fidelity $\ell$ to the fidelity of the principal variate is $\overline{\*S}_\ell = 2^{-\ell}\, \overline{\*\Phi}_{r_\ell}^{\ell}$. The random variables representing the control variate and the ancillary variate in the two-fidelity scheme now represent the corresponding first fidelity variates. Extending the total variate to $\!L$ fidelities gives:
\begin{equation}
    \zb_i = \xb_i - \sum_{\ell=2}^{\!L} 2^{-\ell}\,\overline{\*\Phi}_{{r_\ell},\ell}\,(\uhb_{\ell,i} - \ub_{\ell,i}).
\end{equation}
The empirical Kalman gain is computed through a natural extension o the two-fidelity approach.
The MFEnKF anomaly updates are defined as:
\begin{equation}
\begin{aligned}
    \An{\ua_{\ell,i}} &= \An{\ub_{\ell,i}} - \overline{\*\Phi}_{r_\ell}^{\ell,*}\widetilde{\mathbf{K}}_i \left(\Z{\overline{\*\Phi}_{r_\ell}^{\ell}\ub_{\ell,i}} - \An{\erro^{\u_\ell}_i}\right).
\end{aligned}
\end{equation}
The additive perturbed observation errors are chosen in a fashion similar to the methods described above. Note that for a large number of fidelities, from a practical perspective, it might be beneficial to choose method (ii) with $s_i = 1$, thereby making the synthetic observation error equal for all variates, at the cost of the total variate observation error being reduced to $\Cov{\erro^Z_i}{\erro^Z_i} = ((1+2^{1-2\!L})/3)\,\Cov{\erro_i}{\erro_i}$.

\section{The test model hierarchy.}
\label{sec:models}
One salient feature of our MFEnKF framework is that it can employ different spaces to represent the models at different resolutions.
In our numerical tests we employ the following models of the quasi-geostrophic equations (QGE). The highest resolution model, called the \textit{truth}, represents the reference solution and provides $\xt_i$ and the observation data via \cref{eqn:uobs}. In~\cref{sec:dns} the truth corresponds to a direct numerical simulation (DNS) on a fine mesh. The FOM is an accurate approximation of the truth, and is obtained in \cref{sec:high-res} by performing DNS on a coarser spatial mesh. The ROM is a low cost approximation of the FOM, and is obtained in~\cref{sec:pod} by performing a POD in the FOM space, then reducing the number of modes that represent the dynamics.  

\begin{figure}[t]
\centering
  \includegraphics[width=1\linewidth]{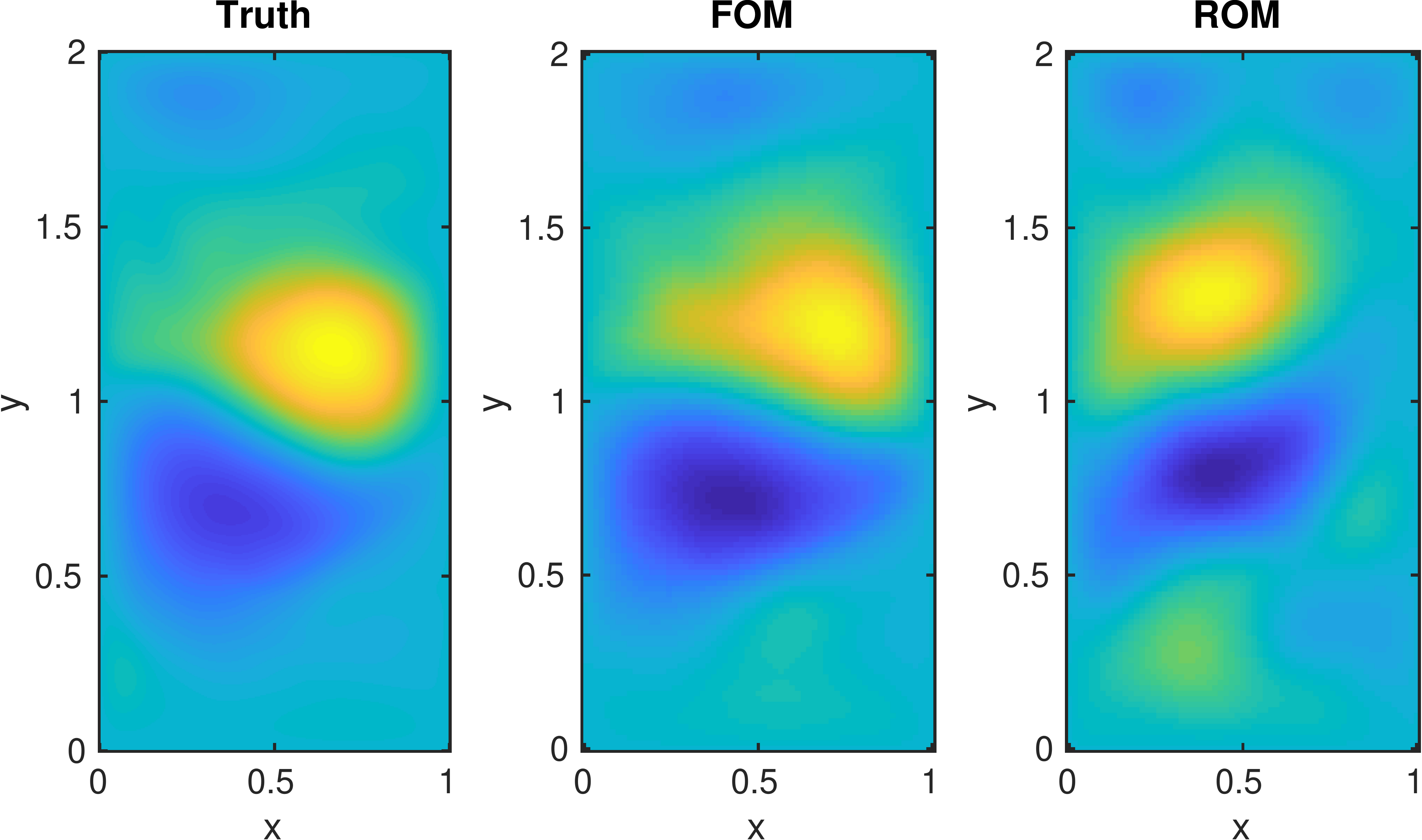}
  \caption{A ten days forecast of of the quasi-geostrophic equations. A perturbation of the true DNS through both the high-res FOM, and the low-res ROM ($r=50$) compared to the unperturbed reference truth. The plotted values represent the streamfunction, with yellow representing positive values and blue representing negative values.}
  \label{fig:qgandrom}
\end{figure}

Figure \cref{fig:qgandrom} presents a  comparison between the truth, the FOM, and the ROM solutions for a ten days forecast with QGE. All discrete models are implemented in our test suite~\cite{otpsoft,otp}.
The DNS computational cost is the highest, the high-res FOM computational cost is $130$ times lower than the DNS cost, and the low-res ROM ($r=50$) computational cost is $63$ times lower than the FOM cost.

\subsection{The quasi-geostrophic equations (QGE).}
\label{sec:qge}
%
The QGE~\cite{foster2013finite,ferguson2008numerical,MW06,greatbatch2000four}
are based on the barotropic vorticity equations, and are widely used in both data assimilation and reduced order modeling, thereby providing an excellent test problem for MFEnKF.  Here we follow the formulation given in~\cite{mou2020data,san2015stabilized}:
\begin{equation}
  \begin{gathered}
    \omega_t + J(\psi,\omega) - \mathrm{Ro}^{-1}\psi_x = \mathrm{Re}^{-1}\Delta\omega + \mathrm{Ro}^{-1}F, \\[0.25em]
    J(\psi,\omega) \equiv \psi_y \omega_x - \psi_x \omega_y,\quad \omega = -\Delta\psi,
  \end{gathered}
  \label{eqn:qge}
\end{equation}
where $\omega$ is the vorticity, $\psi$ is the streamfunction, $\mathrm{Re}$ is the Reynolds number, $\mathrm{Ro}$ is the Rossby number, and $F$ is a forcing term. 
We use a symmetric double gyre for the forcing term~\cite{greatbatch2000four,mou2020data,san2015stabilized}
%
$
  F = \sin\left(\pi(y-1)\right),
$
%
and homogeneous Dirichlet 
boundary conditions 
%
$
    \omega(x, y) = 0, \,
    \psi(x, y) = 0,\, (x,y)\in\partial\Omega,
$
%
where  the computational domain is $\Omega = [0, 1] \times [0, 2]$. The constants are $\mathrm{Re}=450$ and $\mathrm{Ro}=0.0036$. The time scale of the problem uses $80$ time units to represent $20.12$ years~\cite{san2015stabilized}.

\subsection{The  direct numerical simulation.}
    \label{sec:dns}

The truth involves a DNS simulation of the QGE~\eqref{eqn:qge} on a  `fine' spatial mesh with $255$ interior points in the $x$ direction and $511$ points in the $y$ direction. Second order finite difference discretization are used for both first and second order spatial derivatives, together with the Arakawa approximation~\cite{arakawa1966computational,jespersen1974arakawa} for the Jacobian term $J$ in \cref{eqn:qge}. The embedded Poisson equation is solved using a precomputed sparse Cholesky decomposition. 

Time integration for this, and subsequent discretizations is performed using a fourth order `Almost Runge-Kutta' method with adaptive time stepping~\cite{rattenbury2005almost}.  We take observations every 24 hours (approximately $0.0109$ model time units) of $150$ equally-spaced variables. 

\subsection{The full order model.}
\label{sec:high-res}

The FOM performs a numerical simulation of the QGE~\eqref{eqn:qge} on a `coarse' spatial mesh with $63$ interior points in the $x$ direction and $127$ points in the $y$ direction.
The same spatial and temporal discretizations as for the truth simulation are used.
As illustrated in Fig.~\ref{fig:qgandrom}, although the FOM approximation is qualitatively similar to the DNS approximation, the former does not capture all the physical details displayed by the latter.
The changes of grid (from the DNS to FOM state-space) are performed through canonical multigrid techniques~\cite{zubair2009efficient}.

\subsection{The reduced order model (ROM).}
\label{sec:pod}

The construction of ROM for the QGE~\eqref{eqn:qge} follows~\cite{mou2020data,san2015stabilized,strazzullo2018model}.
We start by building the ROM vorticity basis using the proper orthogonal decomposition (POD)~\cite{HLB96} and 
the method of snapshots~\cite{sirovich1987turbulence1}. We collect $\omega_1, \dots, \omega_M$ snapshots of FOM vorticity at $M=700$  different times along a model  trajectory, with each snapshot  roughly 6 months apart in model time. 
The snapshot trajectory is unrelated to the trajectory of the truth  
in order to simulate more realistic operational conditions. 
We build the snapshot covariance matrix
%
$ \left[\!C\right]_{ij} = \inner{\omega_i}{\omega_j}$,
  $i, j = 1, \ldots, M$,
%
using a quadrature approximate integration. The eigendecomposition of $\!C$ yields the ROM vorticity basis $\{ \varphi_1, \ldots, \varphi_r \}$, where $r$ is the ROM dimension. 
The relative kinetic energy of the first $r$ modes~\cite{mou2020data} is listed in \cref{tab:kinetic-energy}, where the relative kinetic energy is calculated based on FOM data over the time $[10,80]$ (units). In numerical simulations we consider 
$r=10$, $25$, and $50$.

%
%
The ROM streamfunction basis is  obtained from the ROM vorticity basis 
by the relationship 
$
    -\Delta \phi_i = \varphi_i, 
    \ i = 1, \ldots, r.
$
%
The ROM vorticity and streamfunction approximations are
\begin{equation}
    \widetilde{\omega}(t) 
    = \sum_{i=1}^r a_i(t)\varphi_i
    \qquad  \text{and} \qquad 
    \widetilde{\psi}(t) 
    = \sum_{i=1}^r a_i(t)\phi_i,
    \label{eqn:rom-expansions}
\end{equation}
respectively.
The dynamics of the unknown ROM coefficients $\*a(t) = [a_1(t),\dots,a_r(t)]^\intercal$ is determined by using a Galerkin projection of the equations ~\eqref{eqn:rom-expansions}: 
%
\begin{equation}
  \*a_t = \*b + \*A\*a + \*a^\intercal\,\!B\,\*a,
  \label{eqn:pod-rom}
\end{equation}
where
%
%
%
\begin{equation}
\begin{aligned}
&  {[\*b]}_i = \mathrm{Ro}^{-1}\inner{F}{\varphi_i},\qquad 
  {[\*A]}_{i,j} = \mathrm{Ro}^{-1}\inner{\frac{\partial \phi_j}{\partial x}}{\varphi_i}-\mathrm{Re}^{-1}\inner{\nabla\varphi_j}{\nabla\varphi_i},\\
&  {[\!B]}_{i,m,n} = -\inner{J(\varphi_m,\phi_n)}{\varphi_i},
\end{aligned}
\end{equation}
with the inner products implemented using the 2D Simpson's rule discretization.
As illustrated in Fig.~\ref{fig:qgandrom}, although the ROM approximation is qualitatively similar to the DNS and FOM, but does not capture all the physical details.

\begin{table}[ht]
\centering
\begin{tabular}{|c | c c c c|} 
 \hline
 $r$ & $10$ & $25$ & $50$ & $100$ \\ [0.5ex] 
 \hline
 Relative KE & $0.9071$ &   $0.9679$  &  $0.9871$ & $0.9963$  \\ [0.5ex]
 \hline
\end{tabular}
\caption{Relative kinetic energy for the first $r$ ROM modes.}
\label{tab:kinetic-energy}
\end{table}

\subsection{Projection operators.}

We now explicitly define the space projection operators from \cref{sec:podcontrol} for the QGE and its corresponding ROM.
Let the matrix $\*D$ represents the 2D Simpon's rule discretization of the spatial inner product, and $\*\Delta$ the discrete version of the Laplacian. Then $\*M_\upsilon = \*\Delta\*D\*\Delta$ in \cref{eqn:projection-operators}.
Let $\*\Psi_r$ be the $r$ dominant eigenvectors of the temporal covariance matrix $\!C$. The following operators preserve the relationship between the vorticity and streamfunction bases:
\begin{equation}
    \*\Phi_r = -\*D^{-1/2}\, \*\Delta^{-1}\, \*\Psi_r,\quad \*\Phi_r^* = -\*\Psi_r^\intercal\,\*D^{1/2} \*\Delta.
\end{equation}
\section{Numerical Experiments.}
\label{sec:numerical}

The numerical experiments aim to investigate the performance of MFEnKF compared against other EnKF methods, to asses how the analysis  accuracy depends on the accuracy of the underlying ROM, and what the usefulness of the ensembles underlying MFEnKF to represent probability distributions of interest. 
We consider two fidelities, see \cref{sec:models}. The principal variate represents the uncertainty in QGE FOM, and our control and ancillary variates represent the uncertainty in QGE ROM. The truth is provided by the QGE DNS model.

In order to create synthetic observations the truth solution is relaxed onto the FOM space, and the states corresponding to 150 equally spaced indices are observed. The observation error covariance is $\Cov{\eta}{\eta} = \*I_{150}$.
In terms of EnKF correction techniques the experiments use inflation, as there is strong evidence~\cite{popov2020explicit} that it is an explicit probabilistic requirement in EnKF-based methods. The same inflation factor is used for the principal and control variate ensembles, and an independently chosen value is used for the ancillary ensemble.
All experiments run for $350$ observation steps. The first $50$ are discarded to account for model spinup, and the rest are used to compute the analysis quality results.

\subsection{Comparison with other techniques.}

\begin{figure}[t]
\centering
  \includegraphics[width=1\linewidth]{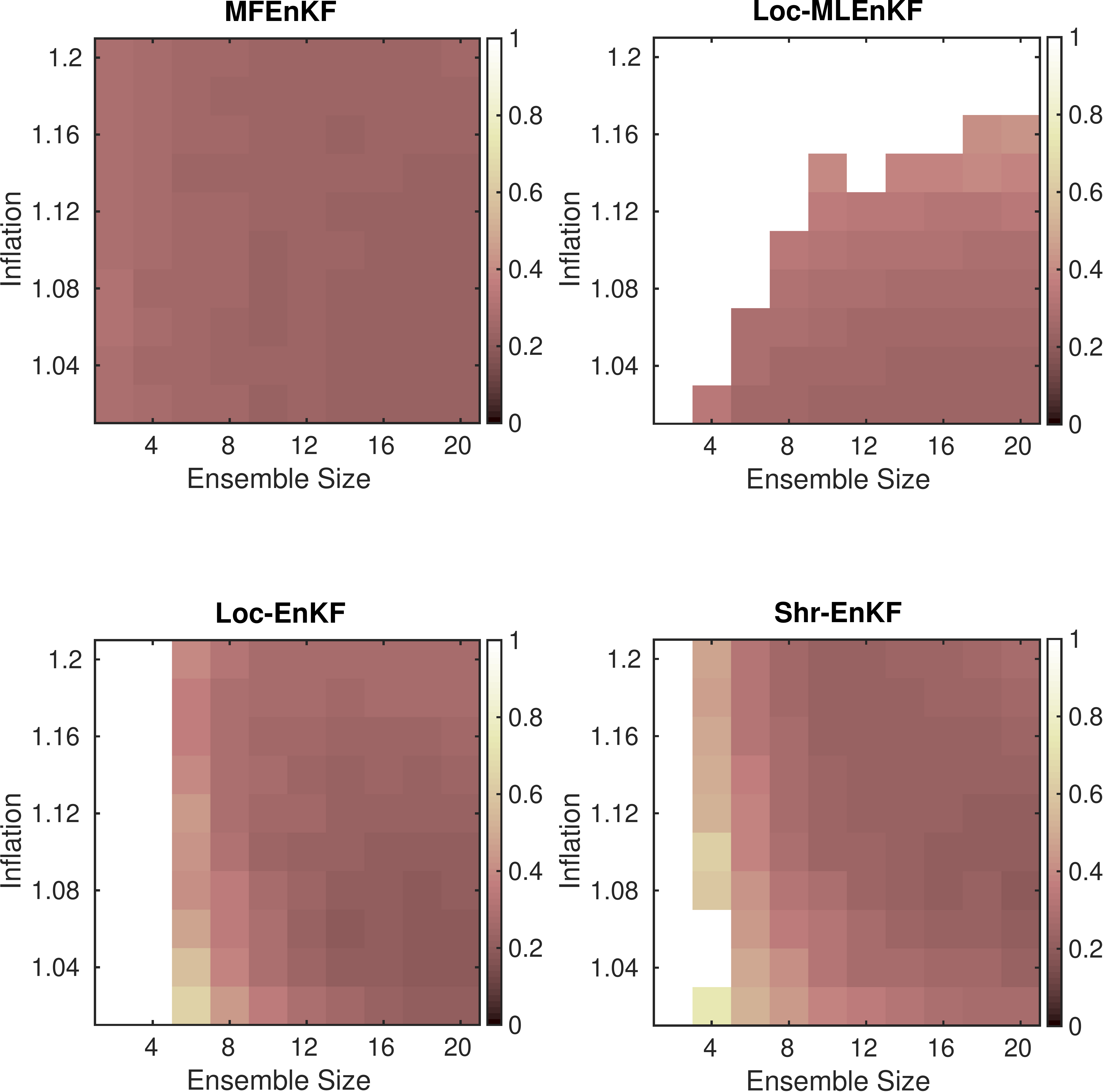}
  \caption{Analyses RMSE comparison of two-fidelity MFEnKF versus a corrected and localized MLEnKF \cite{Hoel_2016_MLEnKF,Chernov_2017_MLEnKF} (Loc-MLenKF), a localized EnKF (Loc-EnKF), and a covariance shrinkage-based EnKF (Shr-EnKF). Darker shades represent lower error, with lighter shades representing higher error.
  }\label{fig:res1}
\end{figure}

We assess the performance of MFEnKF compared to three other perturbed observation filters: a heavily corrected and localized version of the original MLEnKF \cite{Hoel_2016_MLEnKF,Chernov_2017_MLEnKF}, a localized EnKF, and a shrinkage covariance corrected EnKF.

MFEnKF uses a ROM of dimension $r=50$, and ancillary ensemble size $N_{\u}=40$, and ancillary inflation factor 
$\alpha_{\u}=1.1$.
Since the standard MLEnKF \cite{Hoel_2016_MLEnKF,Chernov_2017_MLEnKF} did not converge for this test problem, we consider a modified version  correct it by augmenting the MLEnKF formulas with the forecast correction \cref{eqn:MFEnKF-forecast-step-projection}, the mean correction \cref{eqn:MFEnKF-mean-correction}, and Gaussian kernel localization with a radius of 20 grid units. The ROM and ROM space ensembles have an identical configuration to that used with MFEnKF. All other implementation details follow \cite{Hoel_2016_MLEnKF}. 
The localized EnKF uses Gaussian localization with a radius of 20 grid units. The covariance shrinkage EnKF~\cite{Sandu_2015_covarianceShrinkage} uses the target matrix to be a snapshot-derived localized background covariance matrix and the (normalized) Rao-Blackwellized Ledoit and Wolf estimator.

For all methods we employ different FOM ensemble sizes $N_{\x}=2,4,\dots,20$ and inflation factors $\alpha_{\x}=1.02, 1.04, \dots 1.2$, and calculate the spatio-temporal RMSE (averaged over 3 model runs) of the analysis with respect to the truth. Results reported in \cref{fig:res1} show that MFEnKF outperforms the heavily corrected localized MLEnKF, meaning that our derivation of the MFEnKF from a robust control variate framework indeed has merit. We additionally outperform standard correction techniques such as localization and covariance shrinkage in standard EnKF.  The combination of a FOM and a ROM in ensemble-based methods could be used as a replacement to (or in conjunction with) such methods.

We perform a simple computational cost analysis. For $r=50$ the normalized the cost of  one ROM run is 1 unit, and the cost of one FOM run is approximately $63$ units (empirically measured wall-clock time). The cost of one MFEnKF forecast is $63 \Nens_{\x} + (\Nens_{\x} + \Nens_{\u})$ normalized wall clock units by~\cref{eqn:cost}. If $\Nens_{\x} + \Nens_{\u} \approx 63$, then the cost  roughly equals that of one extra FOM ensemble member.
For $\Nens_{\u} = 40$ and $\Nens_{\x} = 4$ we obtain a stable algorithm for the cost of about 5 FOM runs, while maintaining the accuracy of a (non-localized, not pictured) EnKF with an ensemble size $N_X = 40$, and that of a localized EnKF with an ensemble size of $N_X = 12$. This results in two-fold to eight-fold cost savings.

\subsection{Impact of ROM dimension.}

\begin{figure}[t]
\centering
  \includegraphics[width=0.5\linewidth]{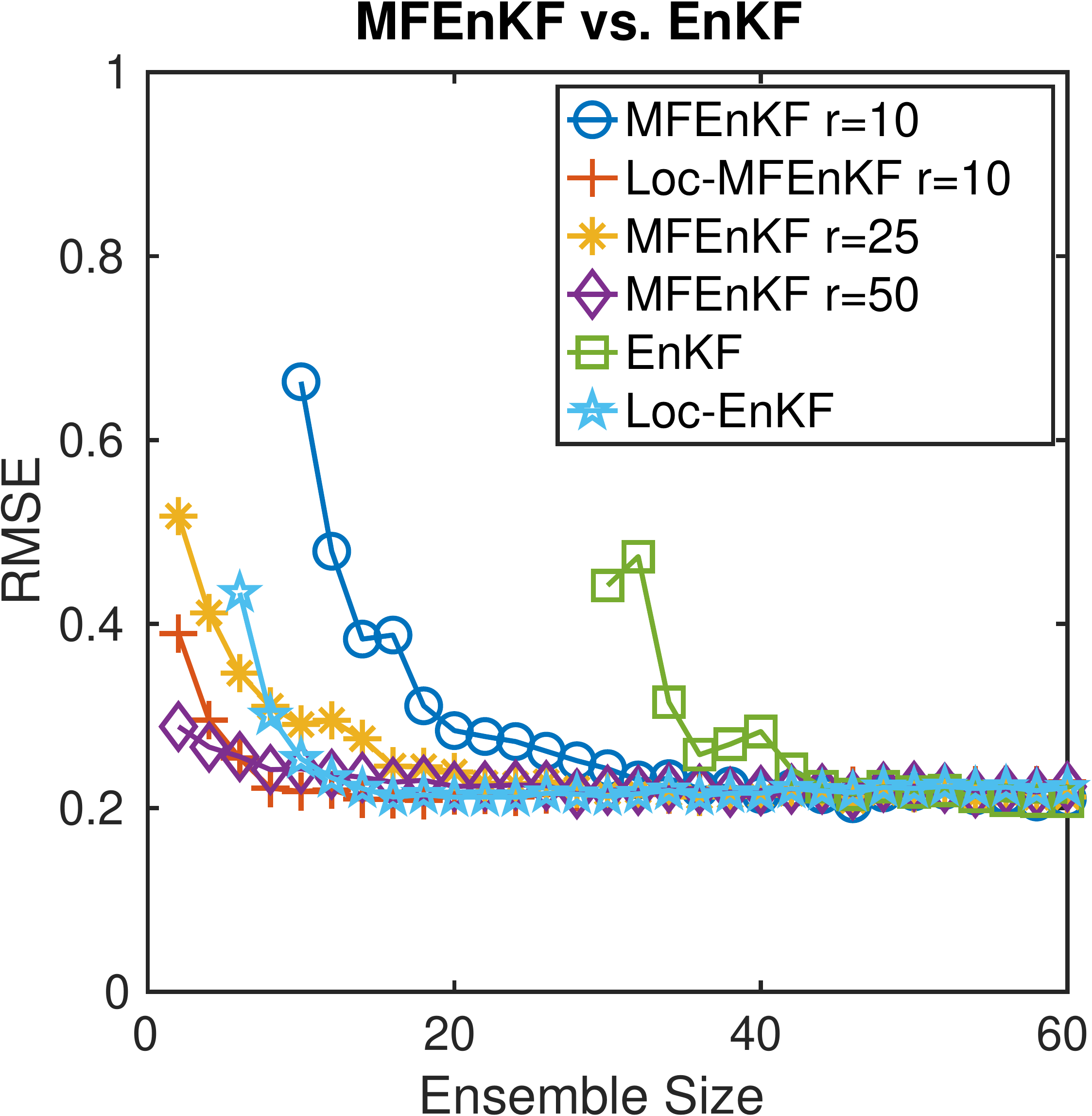}
  \caption{Comparison RMSE of the two-fidelity 
  MFEnKF for various values of the ROM dimension 
  $r$, and both a localized and standard EnKF.}\label{fig:res3}
\end{figure}

The second numerical experiment assesses the impact of ROM basis size. We consider $r=10$, $r=25$, $r=50$, representing a severely underrepresented system, an underrepresented system, and a system with a medium level of representation, respectively. For the severely underrepresented system we use a localized (Gaussian with radius of 20 grid units) implementation.
The ROM ensemble sizes are $N_{\u}=9$, $N_{\u}=20$, and $N_{\u}=40$, respectively, in order to always have undersampled ensembles. For comparison we consider both a localized and a standard EnKF. The inflation factor $\alpha_{\x}=1.1$ is used in all experiments. Spatio-temporal analysis RMSEs (averaged over three runs) for different FOM ensemble sizes $N_{\x}$ are shown in \cref{fig:res3}. Larger ROM bases lead to more accurate analyses. Even with the particularly small basis size $r=10$ MFEnKF is significantly superior to a standard EnKF; this substantiates \cref{rem:positivity-of-covariance} that the magnitude of the analysis covariance can only be improved when an optimal gain is used, even if the quality of the ROM is poor.  A basis of size $r=25$ leads to results very similar to that of the localized EnKF, and that even using only $r=10$ basis vectors with a localized variant of the MFEnKF algorithm is almost as good as using $r=50$ basis vectors.

\subsection{Rank histograms.}

\begin{figure}[t]
\centering
  \includegraphics[width=1\linewidth]{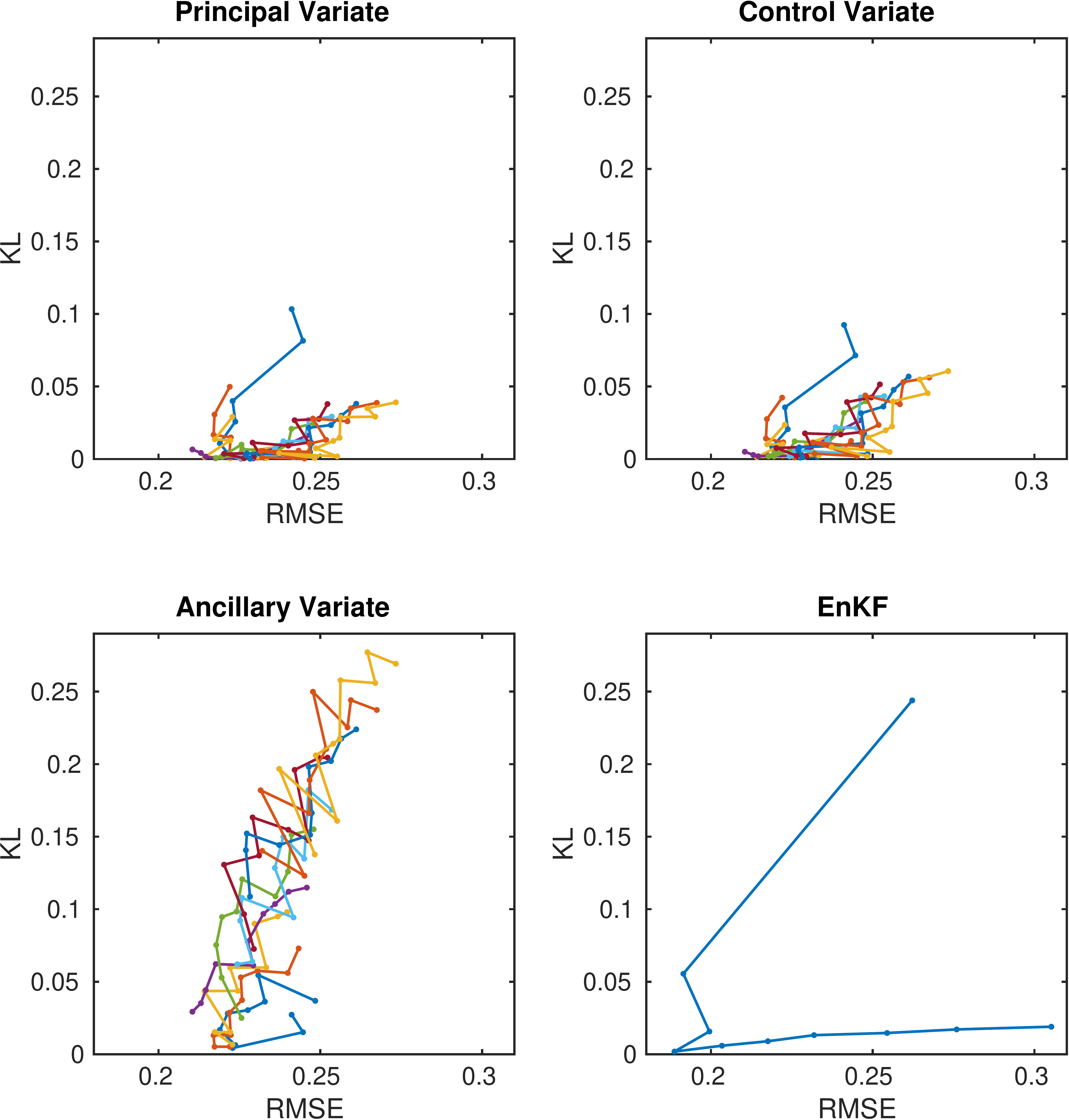}
  \caption{KL divergence (in nats) of rank histogram from uniform distribution compared with spatio-temporal RMSE. For MFEnKF results each line represents a constant value of inflation of the principal variate ensemble, and each point a different value of ancillary variate ensemble inflation. For EnKF result each point represents a different value of inflation.}
  \label{fig:reskl}
\end{figure}

A rank histogram measures the reliability with which an ensemble forecast captures the probability distributions of certain quantities of interest~\cite{hamill2001interpretation}. Consider an ensemble of scalar quantities representing independent draws the exact distribution (here, normal); tallying the number of ensemble members that underestimate each of them should result in a uniformly distributed histogram. 

We consider the rank histograms of the ensembles representing the principal variate, control variate, and the ancillary variate, and measure the KL divergence~\cite{kullback1951information} between these histograms ($Q$) and an ideal uniform distribution ($P$):
\begin{equation}
\label{eqn:KL}
    D_{KL}\left(P\middle|\middle|Q\right) = -\sum_i P_i\log\left(\frac{P_i}{Q_i}\right),
\end{equation}
where the result represents the information (in nats) required to transform one distribution to the other. A value close to zero nats implies that the distributions are essentially indistinguishable.

We construct the rank histograms using the truth values of all 150 observed variables, and assuming their independence from each other. Multiple data assimilation experiments are carried out using inflation factors from $1.02$ to $1.2$ for each of the ensembles considered herein. We compare two-fidelity MFEnKF with $r=50$, $N_{\x} = 20$, and $N_{\u} = 40$ to a vanilla perturbed observations EnKF with $N_{\x} = 60$. For each experiment, ensemble, and algorithm we collect the KL divergence value \eqref{eqn:KL} and the analysis RMSE. 

\Cref{fig:reskl} shows the KL divergence values versus RMSEs, where each point corresponds to a different experiment. It can be seen that the EnKF preserves predictability (low KL divergence value) for almost all values of inflation, and that inflation mainly affects the RMSE. In contrast, for the MFEnKF, inflation does not have such a dramatic impact, especially for the principal and control variates. For the ancillary variate, inflation plays a key role in lowering the KL divergence of the rank histogram from the normal, and has much less impact on RMSE. This means that in terms of predictability, the ensemble of the principal variates is  more reliable than that produced by EnKF.

\section{Conclusions and future work.}
\label{sec:conclusions}

This work develops the new multifidelity ensemble Kalman filter algorithm based on a linear control variate framework.  The multivariate linear control variate theory perspective allows for rigorous multifidelity extensions of the EnKF, where the uncertainty in  coarser levels in the hierarchy of models represent control variates for the uncertainty in finer levels. Thus, complementing a small ensemble of high fidelity model runs with larger ensembles of cheaper, lower fidelity runs, results in improved analyses with only small additional computational costs.
Different models in the hierarchy can have  different state spaces, with different dimensions and/or different inner products. The mapping between different spaces (i.e., the mapping of each control variate to the space of the corresponding principal variates)  is done by gain matrices that can be computed in an optimal way.  The analysis of the new algorithm shows that it always produces better analyses than EnKF with the same number of high fidelity ensemble members.

MFEnKF has several advantages over other approaches to couple information from different models in data assimilation. Using  multimodel ensembles to build empirical covariances in EnKF faces the challenge that different ensemble members have different dimensions. The strategy of stacking models to formally construct a supermodel, and applying EnKF in the aggregated space, cannot employ different numbers of ensemble members of different models. 
MLEnKF \cite{Hoel_2016_MLEnKF,Chernov_2017_MLEnKF} applies EnKF in the high fidelity space, and uses different model levels to improve the empirical covariance estimates. Incorporating different model levels using signed empirical measures leads to  possibly non-positive-definite multilevel covariance estimates, and requires all models to share the same state space.

Numerical experiments with a quasi-geostrophic model reveal that MFEnKF provides significant improved analysis over the standard MLEnKF, and is competitive with other EnKF correction methods such as localization and covariance shrinkage. Moreover, the ensembles underlying the MFEnKF technique are useful in representing the probability distributions of given quantities of interest. 

%

The algorithm discussed herein is a multifidelity variant of the perturbed observations EnKF. An interesting future research direction is to develop multifidelity square root filters, e.g., multifidelity LETKF~\cite{hunt2007efficient}. Another interesting direction is extending the control variate approach to the case where different models are not hierarchically organized.
Proving more rigorous error bounds~\cite{dihlmann2016reduced,zerfas2019continuous,pagani2017efficient} for the new MFEnKF framework could provide further insight into parameter model choices.

\section*{Acknowledgments.}
The first author would like to thank Steven Roberts for his help and knowledge with regards to time integration methods.


\bibliographystyle{siamplain}
\bibliography{Bib/biblio,Bib/traian,Bib/sandu,Bib/data_assim_multilevel,Bib/data_assim_kalman}

\end{document}